\pdfoutput=1
\documentclass[letterpaper, 10 pt, conference]{ieeeconf}  

\IEEEoverridecommandlockouts                              

\usepackage{mathrsfs}
\usepackage[font={small}]{caption}
\usepackage{scalerel}
\usepackage{url}
\usepackage{bbold}
\usepackage{amsfonts}
\usepackage{amsmath,amssymb,amsfonts}
\usepackage{graphicx}
\usepackage{wrapfig}
\usepackage{mathrsfs} 
\usepackage{algorithm,algorithmic}
\usepackage{array}
\usepackage{times}
\usepackage{url}
\usepackage{cite}
\usepackage[thinc]{esdiff}
\newcommand{\citet}[1]{\cite{#1}}
\usepackage{upgreek}
\usepackage{float}
\usepackage{longtable}
\usepackage{color}
\usepackage{wasysym}
\usepackage{grffile}
\usepackage{stackengine}
\usepackage[capitalize,noabbrev]{cleveref}

\usepackage[symbol]{footmisc}

\allowdisplaybreaks

\newcommand{\eb}{\mathbf{e}}

\newcommand{\ub}{\mathbf{u}}
\newcommand{\wb}{\mathbf{w}}
\newcommand{\xb}{\mathbf{x}}

\newcommand{\Ab}{\mathbf{A}}
\newcommand{\Bb}{\mathbf{B}}

\newcommand{\Gb}{\mathbf{G}}
\newcommand{\Hb}{\mathbf{H}}

\newcommand{\Kb}{\mathbf{K}}
\newcommand{\Lb}{\mathbf{L}}
\newcommand{\Mb}{\mathbf{M}}

\newcommand{\Pb}{\mathbf{P}}

\newcommand{\Rb}{\mathbf{R}}

\newcommand{\Wb}{\mathbf{W}}

\newcommand{\Sc}{\mathcal{S}}

\newcommand{\Xc}{\mathcal{X}}

\newcommand{\norm}[1]{\left\lVert#1\right\rVert}

\newtheorem{theorem}{Theorem}

\newtheorem{assumption}{Assumption}

\newtheorem{definition}{Definition}

\newtheorem{lemma}[theorem]{Lemma}

\title{\bf A Spectral Filtering Approach to Regret Analysis of Distributed Online Control for Linear Dynamical Systems}

\author{Ting-Jui Chang
\thanks{T.J. Chang is with the Department of Aeronautics and Astronautics, National Cheng Kung University, Taiwan. 
{\tt\footnotesize email:tjc@gs.ncku.edu.tw}.}%
}

\begin{document}

\maketitle
\thispagestyle{plain}
\pagestyle{plain}

{\color{black}
\begin{abstract}

    This paper studies the distributed online control problem over a network of linear time-invariant (LTI) systems in the presence of adversarial disturbances and time-varying convex costs. The network cost is characterized by the summation of local cost functions, where each local function is sequentially revealed only to the corresponding agent. The goal of each agent is to generate a control sequence, using only local observations and neighbor communication, that competes with the best {\it centralized} linear policy in hindsight. We extend the recently proposed Online Spectral Control framework from the centralized setting to the distributed setting. In particular, each agent applies a spectral controller obtained by convolving past disturbances with the leading eigenvectors of a Hankel matrix, while the controller parameters are updated through a distributed online gradient descent step over the local surrogate costs. We formulate this problem this problem as a {\it regret} minimization problem based on the spectral parameterization, and under standard assumptions, we establish a sublinear regret bound of $O(\frac{\sqrt{T}\text{poly}(\log T)}{\gamma^3})$, where $T$ is the time horizon and $\gamma$ denotes the stability margin. The resulting bound also captures the dependence on the network size and connectivity.

\end{abstract}

}

\section{Introduction}

{\color{black}
In recent years, there has been a growing interest in studying problems at the intersection of control theory, machine learning, and online optimization. Classical optimal control provides a rich collection of tools for designing controllers when the system dynamics and the performance criterion are known in advance. For example, in the linear quadratic regulator (LQR) problem, when the dynamics are given and the cost function is quadratic and known in advance, the optimal controller can be computed through Riccati equations. However, many modern control applications operate in non-stationary environments where the cost metric may vary over time, the disturbance sequence may not follow any statistical model, and future information is unavailable at the time when the controller is applied.

These challenges motivate the study of {\it online} control, where the controller must make sequential decisions while adapting to the revealed cost information. The performance of an online controller is typically measured by {\it regret}, defined as the difference between the cumulative cost incurred by the online controller and that of the best policy in hindsight. A sublinear regret bound implies that the time-averaged performance of the online controller approaches that of the best benchmark policy asymptotically. In the centralized online control setting, recent works \cite{cohen2018online, agarwal2019online, agarwal2019logarithmic} have shown that online learning techniques can be used to design controllers for linear dynamical systems with time-varying convex costs which are unknown in advance. In particular, the Online Spectral Control method \cite{brahmbhatt2025new} proposes a convex relaxation of linear feedback policies by constructing spectral filters from the eigenvectors of a Hankel matrix. This method achieves the same order of regret guarantees as previous disturbance-feedback approaches \cite{agarwal2019online}, while improving the runtime dependence on the inverse stability margin from polynomial to polylogarithmic scaling.

On the other hand, the computational and informational constraints of large-scale control systems motivate the development of distributed control algorithms. In many practical systems, the overall control task is carried out by a network of agents or subsystems, such as unmanned aerial vehicles \cite{buzogany1993automated,wolfe1996decentralized}, mobile robots \cite{yamaguchi1998cooperative,yamaguchi2001distributed,yang2022collaborative} and microsatellite clusters \cite{burns2000techsat,schaub2000spacecraft}. In these applications, it is often unrealistic to assume that a centralized controller has direct access to the complete cost information of all agents at every time step. Instead, each agent observes only its local cost and communicates with its neighbors through a given network topology. The objective is therefore to design local controllers such that each agent, despite having only partial information, can achieve performance comparable to that of a centralized policy that has access to the global network cost.

Motivated by these considerations, this paper studies the distributed online control problem for a network of linear time-invariant systems under adversarial disturbances and time-varying convex costs. Each agent is modeled as an LTI system, and the global network cost is given by the summation of local cost functions. At each round, every agent observes its own state, applies a local control input, receives its local cost function, and exchanges information only with its neighbors. The goal is to minimize the individual regret of each agent with respect to the best centralized policy in hindsight, where the benchmark is chosen from the class of diagonalizably stable linear controllers.

The main difficulty is that the centralized spectral controller cannot be directly applied in the distributed setting. In the centralized case, the controller parameters are updated using the gradient of the global loss. In the distributed case, however, each agent has access only to its local cost function and must rely on communication to approximate the global objective. Moreover, the spectral controller is time-varying because the parameters are updated online, and the actual state depends on the entire history of past controller parameters. This prevents a direct reduction to standard distributed online convex optimization. To address this issue, we introduce a distributed online spectral control framework in which agents update their local spectral parameters through a consensus-based online gradient step over memory-less surrogate costs. The surrogate construction allows the control problem to be reformulated as a distributed online learning problem over a fixed-dimensional convex parameter set, while the spectral filters preserve the approximation power of the centralized spectral controller. With this reparameterization, we are able to analyze the performance of the distributed controller by borrowing the existing techniques from online learning.

Our main contributions are summarized as follows.
\begin{enumerate}
    \item Based on the online spectral controller (OSC) proposed in \cite{brahmbhatt2025new}, we develop the distributed variant called distributed OSC (D-OSC). At each round, every agent applies spectral filters to the past disturbances and computes its action with its local control parameters based on the filtered information. Then, all agents jointly apply a distributed online gradient step (D-OGD) over local surrogate costs (parameterized with the spectral filters) to update the local controllers.

    \item We show that the original distributed online control problem can be transformed into a distributed online learning problem through the parameterization of spectral controllers. By appropriately choosing the hyper-parameters (the learning rate, the window size of past noises and the number of the spectral filters), we derive a sublinear regret bound of $O(\frac{\sqrt{T}\text{poly}(\log T)}{\gamma^3})$ where, as the centralized OSC, the dependence on the stability margin $\gamma$ is improved compared to disturbance feedback controller (DFC) previously proposed in \cite{agarwal2019online}.

    \item With respect to the distributed setup, the dependence of the regret bound on the network connectivity and size is also characterized: $O(\frac{n\sqrt{n}}{(1-\beta)})$, where $n$ denotes the total number of agents and $\beta$ is a parameter inversely proportional to the network connectivity. 
\end{enumerate}

}

\section{Related Literature}

{\color{black}
\textbf{Online Optimization:} Online control is closely related to online convex optimization, where a learner sequentially selects actions before observing the corresponding cost functions. A classical result in this area is online gradient descent, which achieves sublinear regret of $O(\sqrt{T})$ for convex functions \cite{zinkevich2003online} (see e.g., \cite{cesa2006prediction,hazan2016introduction} for more details). In this work, based on the reparameterization using spectral controllers, we transform the original online control problem into a distributed online learning problem which allows us to gauge the control performance through the lens of online learning.  

}

{\color{black}
\textbf{Regret vs. Stability:} The regret of an online controller is not merely an optimization measure; it is also closely related to stability properties of the corresponding closed-loop system. Existing results have shown that, for linear systems driven by adversarial disturbances, regret bounds and stability notions are connected in both time-invariant and time-varying settings regarding the system dynamics \cite{karapetyan2022implications}. This connection provides a control-theoretic justification for using regret as the performance metric. Similar discussion for non-linear systems can also be found in \cite{nonhoff2023relation}.
}

{\color{black}
\textbf{Online Control:}
In contrast to online convex optimization which studies pure optimization problems, online control must additionally account for the effect of current decisions on future states through the system dynamics. This distinction makes online control more challenging. We roughly divide the online control literature into two categories, based on the types of the considered cost functions.

\begin{enumerate}
    \item \textbf{Online LQR Control:} The linear quadratic regulator serves as a canonical optimal control problem, where the stage costs are quadratic functions of the state and control input. When the system dynamics are unknown, the controller must be designed adaptively based on the learned model, and a number of recent works have established sublinear regret guarantees for this setting \cite{dean2018regret,cohen2019learning,cassel2020logarithmic,simchowitz2020naive,lale2022reinforcement}. A related line of research considers systems driven by potentially adversarial disturbances. In this direction, \cite{yu2020power} studied an MPC-type approach for LQR with time-invariant costs under accurate disturbance predictions, while \cite{zhang2021regret} generalized the framework to time-varying quadratic costs with imperfect predictions. Both works analyzed the performance of the proposed methods through dynamic regret bounds. In \cite{cohen2018online}, the setup of time-varying costs and stochastic noises was considered, and a regret bound of $O(\sqrt{T})$ was proved by formulating the problem as a time-varying semi-definite programming (SDP) problem.

    \item \textbf{Online Convex Control:} As opposed to quadratic costs, there is another line of research extending the setup to the case where the time-varying costs are generally convex and the noises are adversarial. Toward this direction, \cite{agarwal2019online} proposed DFC and parameterized the online control problem as an online convex problem with memory for which a regret bound of $O(\sqrt{T})$ was achieved. The bound was further improved to $O(\text{poly}(\text{log}T))$ for strongly convex costs \cite{agarwal2019logarithmic}. This $O(\text{poly}(\text{log}T))$ bound was also proved for the setup where the system is partially observable and the noises are semi-adversarial \cite{simchowitz2020improper}. Recently, \cite{brahmbhatt2025new} proposed the idea of applying spectral filters to approximate linear control policies, which further improved the dependence on the stability margin compared to DFC. Besides the setup where the function information is available, online control with bandit feedback was studied in \cite{sun2023optimal,suggala2024second}.
\end{enumerate}
}

{\color{black}
\textbf{Distributed Control:}
Distributed LQR has received considerable attention in the control literature. Many existing works consider multi-agent systems with known, identical, and dynamically decoupled agents. For instance, \cite{4626964} developed a distributed control synthesis method by solving an LQR problem whose dimension depends on the maximum degree of the underlying communication graph. In \cite{6862471}, the authors characterized necessary conditions for optimal distributed controller design, which leads to a non-convex optimization problem. The work \cite{5299181} studied a multi-agent network in which each agent follows single-integrator dynamics, and showed that computing the optimal controller requires global knowledge of both the network topology and the initial states of all agents. Due to the inherent difficulty of exactly solving the optimal distributed control problem, \cite{8736845} instead derived sufficient conditions for constructing suboptimal distributed controllers. For the unknown dynamics case, \cite{alemzadeh2019distributed} developed a distributed Q-learning algorithm for dynamically decoupled systems. Beyond the setting of identical and decoupled subsystems, several works have also considered more general distributed control problems. For example, \cite{fattahi2019efficient} studied the design of distributed controllers for unknown sparse LTI systems. \cite{furieri2020learning} investigated model-free approaches for distributed LQR and established sample-complexity guarantees for problem classes satisfying a local gradient dominance property, such as quadratically invariant problems. For control problems in non-stationary environments, \cite{chang2021distributed} extended the SDP method proposed in \cite{cohen2018online} to the setup of distributed LQR with known dynamics and achieved the regret bound of the same order as \cite{cohen2018online}. The idea was further generalized to the case with unknown dynamics in \cite{chang2023regret}, and a regret bound of $O(T^{2/3}\log T)$ was derived. 

}
{\color{black}
\textbf{Spectral Filtering:}
Spectral filtering was initially introduced as an improper-learning approach for online prediction in linear dynamical systems (LDS). For LDS prediction with symmetric transition matrices, \cite{hazan2017learning} proposed an efficient algorithm which evades the non-convexity of the original system identification problem by overparameterizing the system through spectral filters. This framework was later extended by \cite{hazan2018spectral} to general latent-state LDSs, where a new convex relaxation was developed to handle non-symmetric dynamics and phase information without explicitly identifying the underlying system. However, this extension incurred dependence on the hidden dimension. More recently, \cite{marsden2025dimension} further advanced this direction by addressing asymmetric and marginally stable LDSs with dimension-free regret guarantees, using spectral filters together with Chebyshev-polynomial-based constructions in the complex plane. In parallel, spectral filtering has also been connected to modern sequence modeling: \cite{agarwal2023spectral} proposed spectral state space models, which use fixed convolutional filters derived from spectral filtering to capture long-range dependencies while retaining robustness to the spectrum and hidden dimension of the underlying dynamics. Besides sequence prediction, the idea of spectral filtering was incorporated into the design of controllers. For offline LQR with unknown dynamics, \cite{arora2018towards} showed that the control problem can be formulated as a convex program without first solving the non-convex system-identification problem. The idea was further extended to the online setup with general convex costs in \cite{brahmbhatt2025new}.
}

\section{Preliminaries and Problem Setup}
\subsection{Notation}
\begin{center}
\begin{tabular}{|c||l|}
    \hline
    $[m]$ & The set $\{1,2,\ldots,m\}$ for any integer $m$ \\
    \hline
    $\norm{\cdot}$ & Euclidean (spectral) norm of a vector (matrix)\\
    \hline
    $\norm{\cdot}_F$ & Frobenius norm of a matrix\\
    \hline
   $\Pi_{\Xc}[\cdot]$ & The operator for the projection to set $\Xc$ \\
    \hline
    $[\Ab]_{ij}$ & The entry in the $i$-th row and $j$-th column of $\Ab$\\
    \hline
    $[\Ab]_{i,:}$ & The $i$-th row of $\Ab$\\
    \hline
    $[\Ab]_{:,j}$ & The $j$-th column of $\Ab$\\
    \hline
    $\mathbf{1}$ & The vector of all ones\\
    \hline
    $\eb_i$ & The $i$-th basis vector\\
    \hline
    $\mathbb{1}$ & Indicator function\\
    \hline
\end{tabular}
\end{center}

\subsection{Dynamical System and Cost Model}
{\color{black}
We study a network composed of m identical agents, each governed by a LTI system. The evolution of the state for agent $i$ follows
$$\xb_{i,t+1} = \Ab\xb_{i,t}+\Bb\ub_{i,t}+\wb_{t},\quad i\in [n],$$
where $\xb_{i,t}\in\mathrm{R}^{d_1}$ and $\ub_{i,t}\in\mathrm{R}^{d_2}$ denote the state and control input at time $t$, respectively. The disturbance sequence $\{\wb_t\}$ is bounded ($\wb_t\leq W,\;\forall t$), unknown ahead of time, and may be chosen adversarially. In this work, we consider the case where the agent has access to the system dynamics, in other words, $(\Ab,\Bb)$ are known.

The distributed online control process unfolds sequentially. At each time step $t$, agent $i$ observes its current state $\xb_{i,t}$, selects an action $\ub_{i,t}$, and subsequently receives a local cost function $f_{i,t}(\cdot,\cdot)$, incurring the cost $f_{i,t}(\xb_{i,t}, \ub_{i,t})$. Although costs are revealed locally, the objective is global: agents aim to minimize the aggregate network cost
$f_t(\cdot,\cdot) = \sum_{i=1}^n f_{i,t}(\cdot,\cdot)$. Thus, the design of the control policy is cooperative, requiring each agent to perform well with respect to the overall system objective rather than just its local cost. 

To evaluate performance, we introduce a centralized benchmark. For any policy $\pi$, its cumulative cost over a horizon of length $T$ is
\begin{equation}\label{Eq: Centralized benchmark}
    J_T(\pi)=\sum_{t=1}^T c_t(\xb^{\pi}_t, \ub^{\pi}_t).
\end{equation}
An online distributed algorithm is assessed by comparing its performance against the best policy in hindsight from a predefined benchmark class $\Pi$. This comparison is formalized through the notion of individual regret, defined for agent $j$ as
\begin{align}\label{Eq: Individual Regret}
    \text{Regret}_T^j(\mathcal{A}):=J_T^j(\mathcal{A})-\min_{\pi\in\Pi}J_T(\pi),
\end{align}
where $J_T^j(\mathcal{A})$ denotes the cumulative network cost evaluated along the trajectory generated by agent $j$ under algorithm $\mathcal{A}$. A desirable algorithm ensures that this regret grows sublinearly in $T$, implying that its average performance converges to that of the optimal centralized policy over time.

Since the objective depends on the global cost, agents must exchange information. This interaction is modeled by an undirected communication graph $\mathcal{G}=(\mathcal{V},\mathcal{E})$, where $\mathcal{V}=[n]$ denotes the set of nodes (i.e., agents) and $\mathcal{E}$ represents the set of edges. The interaction weights are encoded in a matrix $\Pb$ with elements $\geq 0$, which is assumed to be symmetric, doubly stochastic, and connected. By further assuming $\Pb$ has a positive diagonal, the network exhibits a mixing property that governs how quickly information propagates across agents, which plays a key role in the analysis of distributed algorithms:
\begin{equation}
   \sum_{j=1}^n \left|[\Pb^k]_{ji}-1/n\right|\leq \sqrt{n}\beta^k,\:i\in [n],  
\end{equation}
where $\beta$ is the second largest singular value of $\Pb$.
}

{\color{black}
\textbf{Practical Examples:} Our framework can be used to model distributed control problems where the main system is composed of a network of sub-systems with identical and decoupled dynamics \cite{burns2000techsat,buzogany1993automated,yang2022collaborative}, and the general goal is to minimize the network cost using only local information exchange while reaching consensus. Control problems of this kind have been well-studied through the lens of LQR \cite{5299181,4626964,6862471} while in this work, we further extend to the case where the cost is assumed to be general convex.
\begin{enumerate}
    \item The considered framework can be applied for analyzing the online temperature control of a heating, ventilation and air conditioning (HVAC) system \cite{shin2023near,patel2019economic} in a large building with multiple rooms modeled as a network of sub-systems. Supposing the dynamics of each sub-system can be well approximated by an identical dynamical system, the goal, under the condition that all rooms can only exchange the cost information locally, is to minimize the total energy cost over time (the summation of local energy costs of all rooms) while making the temperature of all rooms reach consensus. By applying our proposed algorithm which guarantees an sub-linear bound on \eqref{Eq: Individual Regret} $\forall i$, this goal can be achieved.

    \item Another example is the consensus control problem of mobility cost in mobile sensor networks (MSNs) [4] in the time-varying setup. Considering a MSN where the time-varying network mobility cost of sensors is modeled as $c_t$ at time $t$. Each agent has a local budget of $c_{i,t}$ and the team goal, under the condition that the information can only be exchanged locally, is to design actions that minimize the global network cost over time while reaching consensus. As our regret guarantee applies to the individual regret defined by any agent and the learning process complies with the network topology, the goal of a MSN can be achieved with our proposed method.
\end{enumerate}
}

\subsection{Strong Stability and Strong Controllability}
{\color{black}
In this work, as \cite{brahmbhatt2025new}, the centralized benchmark we consider is a set of $(\kappa,\gamma)$-diagonalizebaly stable policies defined as follows:

\begin{definition}\label{D: Diagonalizable Stability}
    A linear policy $\Kb$ is $(\kappa, \gamma)$-diagonalizably stable ($\kappa>0$ and $0<\gamma\leq 1$) for the LTI system $(\Ab,\Bb)$ if there exists matrices $\Lb$ and $\Hb$ such that $\Ab+\Bb\Kb = \Hb\Lb\Hb^{-1}$ and the following conditions hold:
    \begin{enumerate}
        \item $\Lb$ is diagonal with {\color{black} nonnegative} entries\footnote[1]{The requirement of nonnegative entries can be further relaxed, and is imposed here for the ease of the analysis.}.
        \item $\norm{L}\leq 1-\gamma$\footnote[2]{For the ease of analysis, $\gamma$ is assumed to be less than $2/3$ without loss of generality.}.
        \item $\norm{\Kb},\norm{H},\norm{H^{-1}}\leq \kappa$.
    \end{enumerate}
\end{definition}
The notion of $(\kappa,\gamma)$-diagonalizable stability is similar to the idea of strong stability proposed in \cite{cohen2018online}, which provides a quantification for a stable linear controller $\Kb$ with respect to the system $(\Ab,\Bb)$ in the sense that any stable linear policy is strongly stable with some $\kappa$ and $\gamma$. Despite the requirement of the diagonal structure imposed on $\Lb$, based on Ackermann's formula \cite{ackermann1972entwurf}, there always exists a $\Kb\in \mathcal{S}$ ($\mathcal{S}=\{\Kb: \Kb \text{ is }(\kappa,\gamma)\text{-diagonalizably stable}\}$) that controls the noiseless system.
}

\subsection{Spectral Controller}
{\color{black}
Online learning is an efficient framework for capturing the evolving pattern of a time-varying function sequence. However, this framework does not directly apply to linear policies of type $\ub=\Kb\xb$ due to the non-convex parameterization ($\xb_t$ is non-convex in terms of $\Kb$). To address this problem, \cite{agarwal2019online} proposed a disturbance-feedback controller (DFC), where the control is parameterized as a linear function of past noises from a specified time window, and both the state and control become linear functions of those coefficient matrices applied to the noises. In the work of \cite{brahmbhatt2025new}, it was shown that the dependency of the regret bound of DFC on the {\it stability margin} $\gamma$ (see Definition \ref{D: Diagonalizable Stability}) can be further improved by convolving past noises with the eigenvectors of a specific Hankel matrix $\Hb$ defined as:
\begin{equation}\label{Eq: Hankel matrix}
    [\Hb]_{ij} = \frac{(1-\gamma)^{i+j-1}}{i+j-1}.
\end{equation}
The definition of the corresponding spectral controller (SC) is given as follows:
\begin{definition}\label{D: Spectral Controller}
    A spectral controller $(\Mb_{1:h},\Hb)$ is defined by parameters $\Mb_{1:h} = \{\Mb_1,\ldots,\Mb_h\}$ for $h\geq 1$ and a Hankel matrix $\Hb\in \mathrm{R}^{m\times m}$ following Equation \eqref{Eq: Hankel matrix}. At round $t$, the control $\ub_t$ is determined as follows

    \begin{equation*}
        \ub_t = \sum_{i=1}^h \sigma_i^{1/4}\Mb_i \Wb_{t-1:t-m}\phi_i,
    \end{equation*}
    where $\{(\sigma_j,\phi_j)\}_{j=1}^h$ are the top $h$ eigenpairs of $\Hb$ and $\Wb_{t-1,t-m} = [\wb_{t-1},\ldots,\wb_{t-m}]$.    
\end{definition}
To capture the pattern of a time-varying cost sequence, $\Mb_{1:h}$ has to be updated over time, which leads to a time-varying SC: $\ub_t = \sum_{i=1}^h \sigma_i^{1/4}\Mb_i^t \Wb_{t-1:t-m}\phi_i$. By expanding the dynamics recursively, we can see for a time-varying SC, $\xb_{t+1}$ is a function of $\{\Mb^{t^{\prime}}_{1:h}\}_{t^{\prime}=1}^t$, which circumvents this parameterization from fitting into the framework of online learning since the number of parameters is growing over time. To tackle this problem, we resort to the the memory-less cost function \cite{brahmbhatt2025new} defined as follows:
\begin{equation}\label{eq:memoryless cost function}
    f_{i,t}(\Mb_{1:h}|\Ab,\Bb,\{\wb\}) = f_{i,t}\left(\xb_t(\Mb_{1:h}), \ub_t(\Mb_{1:h})\right),
\end{equation}
where $\xb_t(\Mb_{1:h})$ and $\ub_t(\Mb_{1:h})$ denote the resulting state and control at time $t$ when a fixed $\Mb_{1:h}$ is applied to the dynamical system.

}

\section{Main theoretical results}
{\color{black}

\begin{algorithm}[ht]
\caption{Distributed Online Spectral Control}
\label{alg:D-online spectral control}
\begin{algorithmic}[1]
    {\color{black}
        \STATE {\bfseries Require:} number of agents $n$, doubly stochastic matrix $\Pb\in \mathrm{R}^{n\times n}$, parameters $\gamma,\eta, m, h$, time horizon $T$ and the convex constraint set $\mathcal{K}$.

    \STATE Define\\
    $\mathcal{K} = \{\Mb_{1:h}\in\mathrm{R}^{d_2\times d_1\times h}|\norm{\xb_t(\Mb_{1:h})},\norm{\ub_t(\Mb_{1:h})}\leq \frac{3\kappa^3W}{\gamma}, \norm{\Mb_{1:h}}\leq \kappa^3\sqrt{\frac{2h}{\gamma}}\}$.
     
    \STATE {\bf Initialize:} $\forall i\in[n]$, randomly generate the same $\Mb_{1:h}^{i,1}\in\mathcal{K}$ for all $i$. 

    \STATE Compute $\{(\sigma_j,\phi_j)\}_{j=1}^h$, the top $h$ eigenpairs of the Hankel matrix $\Hb$.
  
    \FOR{$t=1,2,\ldots,T$}
        \FOR{$i=1,2,\ldots,n$}
            \STATE Determine the control $\ub_{i,t}$:
            \begin{equation*}
                \ub_{i,t} = \sum_{j=1}^h \sigma_j^{1/4}\Mb_j^{i,t}\Wb_{t-1:t-m}\phi_j.
            \end{equation*}
            \STATE Observe $\xb_{i,t+1}$ and compute $\wb_{t}=\xb_{i,t+1}-\Ab\xb_{i,t} - \Bb\ub_{i,t}$.                
            
            \STATE Compute $\widehat{\Mb}^{i,t+1}_{1:h} = \sum_{j=1}^n [\Pb]_{ji}\Mb^{j,t}_{1:h}-\eta\nabla_{\Mb} f_{i,t}(\Mb^{i,t}_{1:h}|\Ab,\Bb,\{\wb\})$.

            \STATE Compute $\Mb^{i,t+1}_{1:h} = \Pi_{\mathcal{K}}(\widehat{\Mb}^{i,t+1}_{1:h})$.
            
        \ENDFOR
   \ENDFOR
    
    }
\end{algorithmic}
\end{algorithm}

In this section, we introduce D-OSC, a distributed variant of online spectral controller which determines {\it local} controllers with information collected through {\it local} communication. At each round $t$, all agents jointly perform a distributed online gradient step over the memory-less cost functions $\{f_{i,t}(\Mb_{1:h}^{i,t}|\Ab,\Bb,\{\wb\})\}$. The update step consists two parts: 1) Each agent $i$ fist shares its current iterate $\Mb_{1:h}^{i,t}$ with its neighbors; 2) The weighted average iterate (based on the network topology) 
is then updated using the local gradient $\nabla_{\Mb_{1:h}}f_{i,t}(\Mb_{1:h}^{i,t}|\Ab,\Bb,\{\wb\})$. The updated policy parameters $\Mb_{1:h}^{i,t+1}$ will be used to decide the next local action $\ub_{i,t+1}$ for agent $i$. The learning process is summarized in Algorithm \ref{alg:D-online spectral control}. Later we show that for user-defined hyper-parameters such as $\eta$, $m$ and $h$, by properly selecting them as functions of $\gamma$ and $T$, the resulting individual regret bound is sub-linear in $T$ and enjoy better dependency on $\gamma$ than DFC.

For the technical analysis, we adhere to the standard assumptions used in \cite{agarwal2019online} and \cite{brahmbhatt2025new}. 
\begin{assumption}\label{A: Bounded B and noises}
    The system matrix $\Bb$ and the noise $\wb_t$ are bounded, i.e., $\norm{\Bb}\leq \kappa_B$ and $\norm{\wb_t}\leq W, \;\forall t$\footnote[3]{For ease of analysis, $\kappa$, $\kappa_B$ and $W$ are assumed to be greater than one without loss of generality.}.
\end{assumption}

\begin{assumption}\label{A: Gradient bound}
    The cost functions $f_{i,t}(\xb,\ub), \;\forall i,t$ are convex, and if $\norm{\xb}, \norm{\ub} \leq D$, the gradients are bounded as follows:
    \begin{equation*}
        \norm{\nabla_{\xb}f_{i,t}(\xb,\ub)}, \norm{\nabla_{\ub}f_{i,t}(\xb,\ub)} \leq GD.
    \end{equation*}
\end{assumption}

\begin{assumption}\label{A: zero is diagonally and strongly stable}
    The zero policy $K=0$ is $(\kappa,\gamma)$-diagonalizably stable.
\end{assumption}

The assumption that a zero policy is diagonalizably stable is mainly for the simplicity of the analysis. As shown in Section 8 of \cite{brahmbhatt2025new}, this assumption can be relaxed by using a precomputed $(\kappa,\gamma)$-strongly stable linear controller, which can be done by applying SDP relaxation described in \cite{cohen2018online}. The effect of the relaxed assumption on the main theoretical results boils down to an additional factor in terms of $\kappa$.

We now present our main theorem as follows:
\begin{theorem}\label{T: Main Theorem}
    Suppose Assumptions \ref{A: Bounded B and noises}, \ref{A: Gradient bound} and \ref{A: zero is diagonally and strongly stable} hold. Then by running Algorithm \ref{alg:D-online spectral control} with the hyper-parameters determined as follows:
    \begin{itemize}
        \item $\epsilon = \frac{1}{\sqrt{T}}$,

        \item $m = \lceil \frac{1}{\gamma}\log\left(\frac{8G\kappa_B\kappa^8W^2\sqrt{T}}{\gamma^3}\right)\rceil$,

        \item $h=2\log T\log\left(\frac{600G\kappa_B\kappa^8W^2\sqrt{mdT}}{\kappa^{5/2}}\log T\log^{1/4}\left(\frac{2}{\gamma}\right)\right)$,

        \item $\eta = \frac{\gamma^2}{mh^2\sqrt{T}}$,
    \end{itemize}
    we have the individual regret of any agent $j\in[n]$ being bounded as
    \begin{equation*}
        J_T^j(\mathcal{A}_1)-\min_{\Kb\in\mathcal{S}}J_T(\pi) = \Tilde{O}(\frac{n\sqrt{nT}}{(1-\beta)\gamma^3}),
    \end{equation*}
    where $\mathcal{S}$  denotes the set of $(\kappa,\gamma)$ diagonalizably stable linear controllers.
\end{theorem}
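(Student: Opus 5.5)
The proof splits the individual regret of agent $j$ into three pieces, each inherited from the centralized Online Spectral Control analysis of \cite{brahmbhatt2025new} or from distributed online gradient descent. Write $F_t(\Mb_{1:h})=\sum_{i=1}^n f_{i,t}(\Mb_{1:h}|\Ab,\Bb,\{\wb\})$ for the memory-less network cost. We decompose
\begin{align*}
J_T^j(\mathcal{A}_1)-\min_{\Kb\in\mathcal{S}}J_T(\Kb)
&=\underbrace{\Big[J_T^j(\mathcal{A}_1)-\sum_{t}F_t(\Mb^{j,t}_{1:h})\Big]}_{\text{(I) memory error}}
+\underbrace{\Big[\sum_t F_t(\Mb^{j,t}_{1:h})-\min_{\Mb\in\mathcal{K}}\sum_t F_t(\Mb)\Big]}_{\text{(II) D-OGD regret}}\\
&\quad+\underbrace{\Big[\min_{\Mb\in\mathcal{K}}\sum_t F_t(\Mb)-\min_{\Kb\in\mathcal{S}}J_T(\Kb)\Big]}_{\text{(III) approximation}}.
\end{align*}

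\textbf{Term (III).} The agents share identical dynamics and a common disturbance sequence, so the network cost along the centralized benchmark is the sum of the local costs at a common state and input. We can therefore apply the spectral approximation result of \cite{brahmbhatt2025new} directly. For any $(\kappa,\gamma)$-diagonalizably stable $\Kb$, there is some $\Mb_{1:h}\in\mathcal{K}$ whose stationary trajectory tracks that of $\Kb$. The tracking error comes from two sources. Truncating the disturbance window at $m$ costs a factor $(1-\gamma)^m$. Keeping only the top $h$ eigenvectors of the Hankel matrix $\Hb$ costs an amount controlled by the exponential decay of its eigenvalues, roughly $e^{-h/\log T}$. With the stated $m$ and $h$, each agent's per-step error is $O(\epsilon)=O(1/\sqrt T)$. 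Multiplying by the $n$ local costs and the Lipschitz constant $GD$, with $D=3\kappa^3W/\gamma$ from the definition of $\mathcal{K}$, gives $(\text{III})=\tilde O(n\sqrt T)$.

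\textbf{Term (I).} Agent $j$'s realized state depends on the whole history $\Mb^{j,t-s}_{1:h}$, whereas $F_t$ evaluates the fixed current parameter. Under Assumption \ref{A: zero is diagonally and strongly stable}, $\Ab=\Hb\Lb\Hb^{-1}$ with $\norm{\Lb}\le 1-\gamma$, so the discrepancy between the realized and memory-less states is bounded by
\[
\sum_{s}\kappa^2\kappa_B(1-\gamma)^{s}\,\norm{\ub(\Mb^{j,t-s})-\ub(\Mb^{j,t})}.
\]
Each parameter difference is at most $s$ times the per-step movement. The per-step movement of agent $j$ equals the consensus correction plus $\eta$ times the gradient. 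The consensus correction is itself bounded by the network-disagreement quantity handled in term (II). This makes (I) of order $\frac{1}{\gamma^2}$ times the per-step movement, times $nGD\,T$. There is one complication: the trajectory of agent $j$ is evaluated under every local cost $f_{i,t}$, so these costs must be Lipschitz along agent $j$'s trajectory. That holds because $\mathcal{K}$ bounds the states and inputs.

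\textbf{Term (II), the main obstacle.} This step is where the factors $n\sqrt n/(1-\beta)$ enter. We run the standard D-OGD argument of Yan et al. or Duchi et al. on the convex functions $f_{i,t}(\cdot|\cdot)$. Convexity holds because $\xb_t(\Mb)$ and $\ub_t(\Mb)$ are linear in $\Mb$. First we bound the gradient norm by $L_f=\tilde O(GD\,\kappa_B\kappa^3 W\sqrt{h m}/\gamma)$, using $\sigma_i^{1/4}\le 1$ and the norm bound on the disturbance window. Second, we track the network average $\bar\Mb^t$. Using the mixing property with double stochasticity and the contraction of projections, $\norm{\Mb^{i,t}-\bar\Mb^t}\le \eta L_f\sqrt n/(1-\beta)$ up to constants. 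Third, the usual telescoping argument on $\norm{\bar\Mb^t-\Mb^\*}^2$ yields
\[
\sum_t F_t(\bar\Mb^t)-F_t(\Mb^\*)\le \frac{\mathrm{diam}(\mathcal{K})^2}{2\eta}+\frac{\eta n L_f^2T}{2}+nL_f\sum_{t,i}\norm{\Mb^{i,t}-\bar\Mb^t}.
\]
Passing from $\bar\Mb^t$ to $\Mb^{j,t}$ costs another $nL_f\cdot\eta L_f\sqrt n/(1-\beta)$ per round. Finally we substitute $\mathrm{diam}(\mathcal{K})^2=O(\kappa^6 h/\gamma)$ and $\eta=\gamma^2/(mh^2\sqrt T)$. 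The diameter term becomes $\kappa^6 m h\sqrt T/\gamma$, and since $m=\tilde O(1/\gamma)$ this is $\tilde O(\sqrt T/\gamma^2)$. The remaining terms are $\eta L_f^2 T\cdot n\sqrt n/(1-\beta)$ with $L_f^2\propto mh/\gamma^4$. The hard part is this last bookkeeping, checking that the powers of $\gamma$ (from $D$, from $L_f$, and from the memory factor $1/\gamma^2$ in term (I)) combine to $\gamma^{-3}$ and not worse. If a factor is off, I would first revisit the bound on $L_f$ and use the sharper bound $\sum_i\sigma_i^{1/2}=O(\log(1/\gamma))$ on the spectral weights. Summing (I)–(III) should then give the claimed $\tilde O(n\sqrt{nT}/((1-\beta)\gamma^3))$.
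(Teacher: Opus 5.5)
Your decomposition and tools match the paper's. Term (I) is the same memory-error estimate: agent $j$'s parameter drift, with the consensus correction controlled by the disagreement lemma, summed against $s(1-\gamma)^{s}$. Term (II) is the same D-OGD analysis of the network average via the mixing bound and the projection inequality. Term (III) is the same approximation step; the paper only splits it into window truncation plus spectral approximation, and compares against an explicit $\Mb^*_{1:h}\in\mathcal{K}$ built from $\Kb^*$ rather than $\min_{\mathcal{K}}$.

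The genuine problem is the final accounting you defer. $F_t$ sums $n$ costs while the average iterate moves by $\frac{\eta}{n}\sum_i\nabla f_{i,t}$, so the telescoping term is $\frac{n}{2\eta}\norm{\Mb^1_{1:h}-\Mb^*_{1:h}}^2$. With squared diameter $8\kappa^6h/\gamma$ and $\eta=\gamma^2/(mh^2\sqrt{T})$, this can be as large as $4n\kappa^6mh^3\sqrt{T}/\gamma^3$, not $\kappa^6mh\sqrt{T}/\gamma$. Since $m=\lceil\gamma^{-1}\log(\cdot)\rceil=\tilde{\Theta}(1/\gamma)$, the term is $\tilde{O}(n\sqrt{T}/\gamma^4)$. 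It dominates the $\gamma^{-3}$ memory term, so your concluding sentence does not follow.

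Sharpening $L_f$ cannot fix this, because $L_f$ does not enter the diameter term. Retuning $\eta$ alone does not fix it either. With the paper's estimates, balancing this term (order $nh/(\gamma\eta)$) against term (I) (order $\eta T n^{3/2}mh^2/((1-\beta)\gamma^5)$) gives order $\sqrt{m}/\gamma^3$, i.e.\ $\gamma^{-7/2}$ up to logarithms.

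For calibration, the paper's own final display carries exactly this term. It only asserts that substituting $m,h,\eta$ gives the claim, so as written its $\gamma^{-3}$ also holds only if $m$ is silently absorbed into $\tilde{O}$. To make your argument correct, you must do one of two things. Either report the extra factor $m$, which gives a $\gamma^{-4}$ bound up to logs. Or bound $\norm{\Mb^1_{1:h}-\Mb^*_{1:h}}^2$ without the $1/\gamma$ factor inherited from $\norm{\Mb^*_{1:h}}^2\le 2\kappa^6h/\gamma$.

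A smaller point in term (I): $\mathcal{K}$ constrains only the memoryless trajectory $\xb_t(\Mb_{1:h})$. Lipschitzness of every $f_{i,t}$ along agent $j$'s realized states therefore needs the state-deviation bound itself plus a lower bound on $T$, as in the paper's deviation lemma. Only the realized inputs are controlled directly, since $\ub^{j,\mathcal{A}_1}_t=\ub_t(\Mb^{j,t}_{1:h})$.
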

The analysis of Theorem \ref{T: Main Theorem} mainly consists of three parts: 1) The approximation part, where a diagonalizably stable linear controller is well-approximated by a a spectral controller; 2) The online optimization part, where part of the overall regret is parameterized as an online learning problem using memory-less cost functions ; 3) The gap between the cost induced by actual iterates $(\xb_{i,t}, \ub_{i,t})$ and that of the memory-less cost function. For the second part, through standard analysis of online convex optimization, the resulting bound is sublinear in $T$ and can be handled optimally when $\eta=\Theta(\frac{1}{\sqrt{T}})$. There exists a trade-off between the first and third terms: the approximation gap shrinks geometrically in terms of the memory length $m$, but the the third term increases when $m$ is larger. By properly selecting $m$ and $\eta$ as functions of the stability margin $\gamma$, we show that the dependency on $\gamma$, like the centralized case \cite{brahmbhatt2025new}, is improved in contrast to that derived based on the DFC parameterization. Also, the impact of the network topology is characterized in terms of the network size $n$ and $\beta$, the second largest singular value of $\Pb$, which is inversely proportional to the connectivity. We can see in the regret bound, when the network is more connected or comes with a smaller size, the resulting regret bound is tighter.

}

\section*{Conclusion}
In this work, we studied the distributed online control problem for LTI systems with adversarial disturbances and time-varying convex costs. Based on the spectral parameterization proposed in \cite{brahmbhatt2025new}, we reformulated this online control problem as a distributed online learning problem and proposed a learning approach called D-OSC, which achieves an individual regret bound of $O(\frac{n\sqrt{nT}\text{poly}(\log T)}{(1-\beta)\gamma^3})$. The resulting bound preserves the improved dependence on the stability margin inherited from the centralized spectral controller, while also characterizing the effect of the network size and connectivity.
Possible future directions include the extensions to unknown dynamics, partially observable systems and time-varying dynamics.

\bibliographystyle{IEEEtran}
\bibliography{references}

@inproceedings{agarwal2019online,
  title={Online control with adversarial disturbances},
  author={Agarwal, Naman and Bullins, Brian and Hazan, Elad and Kakade, Sham M and Singh, Karan},
  booktitle={International Conference on Machine Learning (ICML)},
  pages={154--165},
  year={2019},
}

@inproceedings{agarwal2019logarithmic,
  title={Logarithmic regret for online control},
  author={Agarwal, Naman and Hazan, Elad and Singh, Karan},
  booktitle={Advances in Neural Information Processing Systems (NeurIPS)},
  pages={10175--10184},
  year={2019}
}

@inproceedings{simchowitz2020improper,
  title={Improper learning for non-stochastic control},
  author={Simchowitz, Max and Singh, Karan and Hazan, Elad},
  booktitle={Conference on Learning Theory (COLT)},
  pages={3320--3436},
  year={2020},
  organization={PMLR}
}

@inproceedings{cohen2018online,
  title={Online Linear Quadratic Control},
  author={Cohen, Alon and Hasidim, Avinatan and Koren, Tomer and Lazic, Nevena and Mansour, Yishay and Talwar, Kunal},
  booktitle={International Conference on Machine Learning (ICML)},
  pages={1029--1038},
  year={2018}
}

@inproceedings{lale2022reinforcement,
  title={Reinforcement learning with fast stabilization in linear dynamical systems},
  author={Lale, Sahin and Azizzadenesheli, Kamyar and Hassibi, Babak and Anandkumar, Animashree},
  booktitle={International Conference on Artificial Intelligence and Statistics},
  pages={5354--5390},
  year={2022},
  organization={PMLR}
}

@inproceedings{zinkevich2003online,
  title={Online convex programming and generalized infinitesimal gradient ascent},
  author={Zinkevich, Martin},
  booktitle={Proceedings of the 20th international conference on machine learning (icml-03)},
  pages={928--936},
  year={2003}
}

@book{cesa2006prediction,
  title={Prediction, learning, and games},
  author={Cesa-Bianchi, Nicolo and Lugosi, G{\'a}bor},
  year={2006},
  publisher={Cambridge university press}
}

@article{hazan2016introduction,
  title={Introduction to Online Convex Optimization},
  author={Hazan, Elad},
  journal={Foundations and Trends in Optimization},
  volume={2},
  number={3-4},
  pages={157--325},
  year={2016},
  publisher={Now Publishers Inc.}
}

@inproceedings{hazan2017learning,
  title={Learning linear dynamical systems via spectral filtering},
  author={Hazan, Elad and Singh, Karan and Zhang, Cyril},
  booktitle={Advances in Neural Information Processing Systems (NeurIPS)},
  pages={6702--6712},
  year={2017}
}

@misc{
arora2018towards,
title={Towards Provable Control for Unknown Linear Dynamical Systems},
  author={Arora, Sanjeev and Hazan, Elad and Lee, Holden and Singh, Karan and Zhang, Cyril and Zhang, Yi},
year={2018},
}

@article{fattahi2019efficient,
  title={Efficient Learning of Distributed Linear-Quadratic Control Policies},
  author={Fattahi, Salar and Matni, Nikolai and Sojoudi, Somayeh},
  journal={SIAM Journal on Control and Optimization},
  volume={58},
  number={5},
  pages={2927--2951},
  year={2020},
}

@ARTICLE{5299181,  author={Y. {Cao} and W. {Ren}},  journal={IEEE Transactions on Systems, Man, and Cybernetics, Part B (Cybernetics)},   title={Optimal Linear-Consensus Algorithms: An LQR Perspective},   year={2010},  volume={40},  number={3},  pages={819-830},}

@ARTICLE{4626964,  author={F. {Borrelli} and T. {Keviczky}},  journal={IEEE Transactions on Automatic Control},   title={Distributed LQR Design for Identical Dynamically Decoupled Systems},   year={2008},  volume={53},  number={8},  pages={1901-1912},}

@INPROCEEDINGS{6862471,  author={A. {Mosebach} and J. {Lunze}},  booktitle={European Control Conference (ECC)},   title={Synchronization of autonomous agents by an optimal networked controller},   year={2014},  volume={},  number={},  pages={208-213},}

@ARTICLE{8736845,  author={J. {Jiao} and H. L. {Trentelman} and M. K. {Camlibel}},  journal={IEEE Transactions on Automatic Control},   title={A Suboptimality Approach to Distributed Linear Quadratic Optimal Control},   year={2020},  volume={65},  number={3},  pages={1218-1225},}

@inproceedings{alemzadeh2019distributed,
  title={Distributed q-learning for dynamically decoupled systems},
  author={Alemzadeh, Siavash and Mesbahi, Mehran},
  booktitle={American Control Conference (ACC)},
  pages={772--777},
  year={2019},
}

@inproceedings{furieri2020learning,
  title={Learning the globally optimal distributed LQ regulator},
  author={Furieri, Luca and Zheng, Yang and Kamgarpour, Maryam},
  booktitle={Learning for Dynamics and Control (L4DC)},
  pages={287--297},
  year={2020}
}

@inproceedings{cohen2019learning,
  title={Learning Linear-Quadratic Regulators Efficiently with only $\sqrt{T}$ Regret},
  author={Cohen, Alon and Koren, Tomer and Mansour, Yishay},
  booktitle={International Conference on Machine Learning (ICML)},
  pages={1300--1309},
  year={2019},
  organization={PMLR}
}

@inproceedings{dean2018regret,
  title={Regret bounds for robust adaptive control of the linear quadratic regulator},
  author={Dean, Sarah and Mania, Horia and Matni, Nikolai and Recht, Benjamin and Tu, Stephen},
  booktitle={International Conference on Neural Information Processing Systems (NeurIPS)},
  pages={4192--4201},
  year={2018}
}

@inproceedings{zhang2021regret,
  title={On the regret analysis of online LQR control with predictions},
  author={Zhang, Runyu and Li, Yingying and Li, Na},
  booktitle={American Control Conference (ACC)},
  pages={697--703},
  year={2021},
}

@inproceedings{chang2021distributed,
  title={Distributed online linear quadratic control for linear time-invariant systems},
  author={Chang, Ting-Jui and Shahrampour, Shahin},
  booktitle={American Control Conference (ACC)},
  pages={923--928},
  year={2021},
}

@inproceedings{cassel2020logarithmic,
  title={Logarithmic regret for learning linear quadratic regulators efficiently},
  author={Cassel, Asaf and Cohen, Alon and Koren, Tomer},
  booktitle={International Conference on Machine Learning (ICML)},
  pages={1328--1337},
  year={2020},
  organization={PMLR}
}

@inproceedings{simchowitz2020naive,
  title={Naive exploration is optimal for online lqr},
  author={Simchowitz, Max and Foster, Dylan},
  booktitle={International Conference on Machine Learning (ICML)},
  pages={8937--8948},
  year={2020},
  organization={PMLR}
}

@article{yu2020power,
  title={The Power of Predictions in Online Control},
  author={Yu, Chenkai and Shi, Guanya and Chung, Soon-Jo and Yue, Yisong and Wierman, Adam},
  journal={Advances in Neural Information Processing Systems (NeurIPS)},
  volume={33},
  year={2020}
}

@article{chang2023regret,
  title={Regret analysis of distributed online LQR control for unknown LTI systems},
  author={Chang, Ting-Jui and Shahrampour, Shahin},
  journal={IEEE Transactions on Automatic Control},
  year={2023},
  publisher={IEEE}
}

@article{karapetyan2022implications,
  title={Implications of Regret on Stability of Linear Dynamical Systems},
  author={Karapetyan, Aren and Tsiamis, Anastasios and Balta, Efe C and Iannelli, Andrea and Lygeros, John},
  journal={arXiv preprint arXiv:2211.07411},
  year={2022}
}

@article{nonhoff2023relation,
  title={On the relation between dynamic regret and closed-loop stability},
  author={Nonhoff, Marko and M{\"u}ller, Matthias A},
  journal={Systems \& Control Letters},
  volume={177},
  pages={105532},
  year={2023},
  publisher={Elsevier}
}

@inproceedings{burns2000techsat,
  title={TechSat 21: formation design, control, and simulation},
  author={Burns, Rich and McLaughlin, Craig A and Leitner, Jesse and Martin, Maurice},
  booktitle={2000 IEEE Aerospace Conference. Proceedings (Cat. No. 00TH8484)},
  volume={7},
  pages={19--25},
  year={2000},
  organization={IEEE}
}

@article{schaub2000spacecraft,
  title={Spacecraft formation flying control using mean orbit elements},
  author={Schaub, Hanspeter and Vadali, Srinivas R and Junkins, John L and Alfriend, Kyle T},
  journal={The Journal of the Astronautical Sciences},
  volume={48},
  pages={69--87},
  year={2000},
  publisher={Springer}
}

@inproceedings{buzogany1993automated,
  title={Automated control of aircraft in formation flight},
  author={Buzogany, L and Pachter, M and D'azzo, J},
  booktitle={Guidance, Navigation and Control Conference},
  pages={3852},
  year={1993}
}

@inproceedings{wolfe1996decentralized,
  title={Decentralized controllers for unmanned aerial vehicle formation flight},
  author={Wolfe, J and Chichka, D and Speyer, J},
  booktitle={Guidance, Navigation, and Control Conference},
  pages={3833},
  year={1996}
}

@inproceedings{yamaguchi1998cooperative,
  title={A cooperative hunting behavior by mobile robot troops},
  author={Yamaguchi, Hiroaki},
  booktitle={Proceedings. 1998 IEEE International Conference on Robotics and Automation (Cat. No. 98CH36146)},
  volume={4},
  pages={3204--3209},
  year={1998},
  organization={IEEE}
}

@article{yamaguchi2001distributed,
  title={A distributed control scheme for multiple robotic vehicles to make group formations},
  author={Yamaguchi, Hiroaki and Arai, Tamio and Beni, Gerardo},
  journal={Robotics and Autonomous systems},
  volume={36},
  number={4},
  pages={125--147},
  year={2001},
  publisher={Elsevier}
}

@article{yang2022collaborative,
  title={Collaborative navigation and manipulation of a cable-towed load by multiple quadrupedal robots},
  author={Yang, Chenyu and Sue, Guo Ning and Li, Zhongyu and Yang, Lizhi and Shen, Haotian and Chi, Yufeng and Rai, Akshara and Zeng, Jun and Sreenath, Koushil},
  journal={IEEE Robotics and Automation Letters},
  volume={7},
  number={4},
  pages={10041--10048},
  year={2022},
  publisher={IEEE}
}

@article{patel2019economic,
  title={Economic optimization of distributed embedded battery units for large-scale heating, ventilation, and air conditioning applications},
  author={Patel, Nishith R and Rawlings, James B and Ellis, Matthew J and Wenzel, Michael J and Turney, Robert D},
  journal={AIChE Journal},
  volume={65},
  number={7},
  pages={e16576},
  year={2019},
  publisher={Wiley Online Library}
}

@article{shin2023near,
  title={Near-optimal distributed linear-quadratic regulator for networked systems},
  author={Shin, Sungho and Lin, Yiheng and Qu, Guannan and Wierman, Adam and Anitescu, Mihai},
  journal={SIAM Journal on Control and Optimization},
  volume={61},
  number={3},
  pages={1113--1135},
  year={2023},
  publisher={SIAM}
}

@article{brahmbhatt2025new,
  title={A New Approach to Controlling Linear Dynamical Systems},
  author={Brahmbhatt, Anand and Buzaglo, Gon and Druchyna, Sofiia and Hazan, Elad},
  journal={arXiv preprint arXiv:2504.03952},
  year={2025}
}

@article{ackermann1972entwurf,
  title={Der entwurf linearer regelungssysteme im zustandsraum},
  author={Ackermann, J{\"u}rgen},
  journal={at-Automatisierungstechnik},
  number={7},
  pages={297--300},
  year={1972},
  publisher={Walter de Gruyter GmbH}
}

@article{sun2023optimal,
  title={Optimal rates for bandit nonstochastic control},
  author={Sun, Y Jennifer and Newman, Stephen and Hazan, Elad},
  journal={Advances in Neural Information Processing Systems},
  volume={36},
  pages={21908--21919},
  year={2023}
}

@inproceedings{suggala2024second,
  title={Second order methods for bandit optimization and control},
  author={Suggala, Arun and Sun, Y Jennifer and Netrapalli, Praneeth and Hazan, Elad},
  booktitle={The Thirty Seventh Annual Conference on Learning Theory},
  pages={4691--4763},
  year={2024},
  organization={PMLR}
}

@article{agarwal2023spectral,
  title={Spectral state space models},
  author={Agarwal, Naman and Suo, Daniel and Chen, Xinyi and Hazan, Elad},
  journal={arXiv preprint arXiv:2312.06837},
  year={2023}
}

@article{hazan2018spectral,
  title={Spectral filtering for general linear dynamical systems},
  author={Hazan, Elad and Lee, Holden and Singh, Karan and Zhang, Cyril and Zhang, Yi},
  journal={Advances in Neural Information Processing Systems},
  volume={31},
  year={2018}
}

@article{marsden2025dimension,
  title={Dimension-free Regret for Learning Asymmetric Linear Dynamical Systems},
  author={Marsden, Annie and Hazan, Elad},
  journal={arXiv e-prints},
  pages={arXiv--2502},
  year={2025}
}

\newpage
\onecolumn
\section{Supplementary}

\subsection{Complementary materials}

\begin{lemma}\label{L: Deviation between actual costs and reparameterize costs}
    Suppose Assumptions \ref{A: Bounded B and noises}, \ref{A: Gradient bound}
 and \ref{A: zero is diagonally and strongly stable} hold. Then by running Algorithm with $\eta = \frac{\gamma^2}{mh^2\sqrt{T}}$, we have $\forall i,j,t$:

    \begin{equation*}
        |f_{j,t}(\xb_t^{i,\mathcal{A}_1}, \ub_t^{i,\mathcal{A}_1}) - f_{j,t}(\Mb_{1:h}^{i,t}|\Ab,\Bb,\{\wb\})|\leq \frac{144 G^2\kappa_B^2\kappa^{10}W^4\sqrt{n}}{\gamma^3\sqrt{T}(1-\beta)}\log^{1/2}\left(\frac{2}{\gamma}\right),
    \end{equation*}
    for $T \geq \left(\frac{36G\kappa_B^2\kappa^4W^2\sqrt{n}}{\gamma(1-\beta)}\log^{1/2}\left(\frac{2}{\gamma}\right)\right)^2$.    
\end{lemma}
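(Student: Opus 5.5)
We bound the gap by the distance between the actual state/control and the memory-less state/control, and then bound the parameter drift of agent $i$ over the last $m$ rounds.

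\textbf{Step 1 (Lipschitz reduction).} Both the actual pair $(\xb_t^{i,\mathcal{A}_1},\ub_t^{i,\mathcal{A}_1})$ and the memory-less pair $(\xb_t(\Mb^{i,t}_{1:h}),\ub_t(\Mb^{i,t}_{1:h}))$ have norms at most $D=\frac{3\kappa^3W}{\gamma}$. For the memory-less pair this is the definition of $\mathcal{K}$. For the actual pair it follows from the same unrolling argument, because every iterate $\Mb^{i,s}$ lies in $\mathcal{K}$ after projection. Assumption \ref{A: Gradient bound} and convexity then give
\begin{equation*}
\bigl|f_{j,t}(\xb_t^{i,\mathcal{A}_1},\ub_t^{i,\mathcal{A}_1})-f_{j,t}(\Mb^{i,t}_{1:h}|\Ab,\Bb,\{\wb\})\bigr|\le GD\bigl(\norm{\xb_t^{i,\mathcal{A}_1}-\xb_t(\Mb^{i,t}_{1:h})}+\norm{\ub_t^{i,\mathcal{A}_1}-\ub_t(\Mb^{i,t}_{1:h})}\bigr).
\end{equation*}
The two controls coincide, since both use $\Mb^{i,t}$ and the same disturbance window. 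So only the state difference remains.

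\textbf{Step 2 (unrolling with the zero policy stable).} By Assumption \ref{A: zero is diagonally and strongly stable}, $\Ab=\Hb\Lb\Hb^{-1}$ with $\norm{\Ab^k}\le\kappa^2(1-\gamma)^k$. Unrolling the dynamics gives
\begin{equation*}
\xb_t^{i,\mathcal{A}_1}-\xb_t(\Mb^{i,t})=\sum_{k\ge1}\Ab^{k-1}\Bb\sum_{l=1}^h\sigma_l^{1/4}(\Mb^{i,t-k}_l-\Mb^{i,t}_l)\Wb_{t-k-1:t-k-m}\phi_l.
\end{equation*}
We use $\norm{\Wb\phi_l}\le\sqrt{m}W$ and $\sigma_l\le\norm{\Hb}=O(\log(2/\gamma))$; the latter supplies the $\log^{1/2}(2/\gamma)$ factor after combining with $h$. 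The difference is therefore bounded by $\kappa^2\kappa_B W\sqrt{m}\,\sigma_{\max}^{1/4}\sum_k(1-\gamma)^{k-1}\sqrt h\max_l\norm{\Mb^{i,t-k}_l-\Mb^{i,t}_l}$. Terms with large $k$ could alternatively be handled using the boundedness of $\mathcal{K}$, but the geometric weights already make the sum converge once drift grows at most linearly in $k$.

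\textbf{Step 3 (drift of a single agent's iterate — the main obstacle).} In centralized OGD the drift is at most $k\eta\times$(gradient bound). Here $\Mb^{i,s+1}$ also involves consensus mixing, so we decompose around the network average $\bar\Mb^s=\frac1n\sum_j\Mb^{j,s}$:
\begin{equation*}
\norm{\Mb^{i,t-k}-\Mb^{i,t}}\le\norm{\Mb^{i,t-k}-\bar\Mb^{t-k}}+\norm{\bar\Mb^{t-k}-\bar\Mb^{t}}+\norm{\bar\Mb^{t}-\Mb^{i,t}}.
\end{equation*}

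\emph{Middle term.} Double stochasticity of $\Pb$ gives $\norm{\bar\Mb^{s+1}-\bar\Mb^s}\le\eta G_M$, where $G_M$ is the gradient bound of the memory-less cost. This needs the usual care with projection: write $\Mb^{i,s+1}=\widehat\Mb^{i,s+1}+\text{err}$ and bound the error by $\eta G_M$ via nonexpansiveness. The middle term is then at most $k\eta G_M$.

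\emph{Consensus terms.} Starting from identical initializations and unrolling with the mixing property $\sum_j|[\Pb^k]_{ji}-1/n|\le\sqrt n\beta^k$, the standard argument gives $\norm{\Mb^{i,s}-\bar\Mb^s}\le\frac{2\eta G_M\sqrt n}{1-\beta}$ (up to a constant).

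\emph{Gradient bound.} The chain rule through $\xb_t(\Mb)$ and $\ub_t(\Mb)$, together with Step 2, gives $G_M=O\bigl(GD\kappa_B\kappa^2W\sqrt{mh}\,\sigma_{\max}^{1/4}/\gamma\bigr)$.

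\textbf{Step 4 (collecting and substituting).} Summing the geometric series gives $\sum_k(1-\gamma)^{k-1}(k+\frac{\sqrt n}{1-\beta})=O\bigl(\frac{\sqrt n}{\gamma^2(1-\beta)}\bigr)$. Multiplying the factors $GD$, $\kappa^2\kappa_BW\sqrt{mh}\,\sigma^{1/4}$, $\eta G_M$ and this sum, and substituting $\eta=\gamma^2/(mh^2\sqrt T)$, cancels the $m$ and $h$ dependence. This yields a bound of order $\frac{G^2\kappa_B^2\kappa^{10}W^4\sqrt n}{\gamma^3\sqrt T(1-\beta)}\log^{1/2}(2/\gamma)$.

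The lower bound on $T$ is used to ensure $\eta G_M\cdot\frac{\sqrt n}{1-\beta}\le 1$ (a constant fraction). This keeps the unprojected iterate close enough that the projection-error and boundedness arguments in Step 3 close; it is the other place where care is needed.
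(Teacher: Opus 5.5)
Your overall structure matches the paper's proof. The two controls coincide, the state gap is unrolled into a $(1-\gamma)^{k-1}$-weighted sum of parameter drifts, and the drift is controlled by consensus error plus gradient steps. The sum $\sum_k k(1-\gamma)^{k-1}\le 1/\gamma^2$ is used, and $\eta=\gamma^2/(mh^2\sqrt T)$ cancels the $m,h$ dependence. Your decomposition of the drift around the network average at times $t-k$ and $t$ is a harmless variant of the paper's route. The paper instead sums per-step movements $\norm{\Mb^{i,s}_{1:h}-\Mb^{i,s-1}_{1:h}}$, each bounded by projection error plus consensus error plus one gradient step. One small correction: once projection errors are included, the average moves by up to $2\eta G_M$ per step, not $\eta G_M$.

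The genuine gap is in Step 1, in bounding the actual state $\xb_t^{i,\mathcal{A}_1}$, which you need before Assumption 2 can be invoked at all. Membership $\Mb^{i,s}_{1:h}\in\mathcal{K}$ for every $s$ only controls trajectories $\xb_t(\Mb)$ generated by a single frozen parameter. The actual trajectory is driven by the time-varying sequence $\Mb^{i,t-1}_{1:h},\Mb^{i,t-2}_{1:h},\ldots$ and is not of that form. If you unroll it using only what $\mathcal{K}$ provides (for instance $\norm{\ub_s(\Mb)}\le 3\kappa^3W/\gamma$ for each iterate), you get roughly $\frac{\kappa^2W}{\gamma}+\frac{3\kappa_B\kappa^5W}{\gamma^2}$. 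That extra factor $\kappa_B\kappa^2/\gamma$ would degrade the final $\gamma^{-3}$ to $\gamma^{-4}$.

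The missing step is to reverse the order of the argument:
\begin{itemize}
\item First bound the deviation $\norm{\xb_t^{i,\mathcal{A}_1}-\xb_t(\Mb^{i,t}_{1:h})}$. This uses only gradients of the memory-less costs at points of $\mathcal{K}$, so it needs no bound on the actual state.
\item With the given $\eta$ (and the paper's constants), this deviation is at most $\frac{36G\kappa_B^2\kappa^7W^3\sqrt n}{\gamma^2\sqrt T(1-\beta)}\log^{1/2}(2/\gamma)$.
\item The lemma's lower bound on $T$ is exactly the condition that makes this at most $\kappa^3W/\gamma$.
\item Hence $\norm{\xb_t^{i,\mathcal{A}_1}}\le 4\kappa^3W/\gamma$, and Assumption 2 applies with $D=4\kappa^3W/\gamma$. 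This is where the constant $144=4\cdot 36$ comes from.
\end{itemize}

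The role you assign to the lower bound on $T$ in Step 4 is not needed. The projection error satisfies $\norm{r_{i,t}}\le\eta G_M$ unconditionally, because $\sum_j[\Pb]_{ji}\Mb^{j,t-1}_{1:h}\in\mathcal{K}$ by convexity. The consensus bound follows from mixing alone. Neither requires $\eta G_M\sqrt n/(1-\beta)$ to be small.
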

\begin{proof}
    Based on how each agent's iterate is updated, for each $k\in[t-1]$ and $h^{\prime}\in[h]$, we have
    \begin{equation}\label{Eq2: L Deviation between actual costs and reparameterize costs}
    \begin{split}
        &\norm{\Mb_{h^{\prime}}^{i,t} - \Mb_{h^{\prime}}^{i,t-k}}\\
        \leq &\norm{\Mb_{1:h}^{i,t} - \Mb_{1:h}^{i,t-k}}\leq \sum_{s=t-k+1}^t \norm{\Mb_{1:h}^{i,s} - \Mb_{1:h}^{i,s-1}}\\
        \leq &\sum_{s=t-k+1}^t \left[\norm{\Mb_{1:h}^{i,s} - \widehat{\Mb}_{1:h}^{i,s}} + \norm{\widehat{\Mb}_{1:h}^{i,s} - \Mb_{1:h}^{i,s-1}}\right]\\
        \leq &\sum_{s=t-k+1}^t\left[\norm{r_{i,t}} + \norm{\sum_{j=1}^n\left[\Pb_{ij}\left(\Mb^{j,t-1}_{1:h} - \Mb^{t-1}_{1:h} + \Mb^{t-1}_{1:h} - \Mb^{i,t-1}_{1:h}\right)\right] - \eta\nabla f_{i,t-1}(\Mb_{1:h}^{i,t-1})|\Ab,\Bb,\{\wb\}}\right]\\
        \leq &\sum_{s=t-k+1}^t \left[\eta \frac{12G\kappa_B\kappa^5W^2\sqrt{m}h}{\gamma^2}\log^{1/4}\left(\frac{2}{\gamma}\right) + \frac{24\eta G\kappa_B\kappa^5W^2\sqrt{m}h\sqrt{n}}{\gamma^2(1-\beta)}\log^{1/4}\left(\frac{2}{\gamma}\right)\right]\\
        \leq &k\frac{36\eta G\kappa_B\kappa^5W^2\sqrt{m}h\sqrt{n}}{\gamma^2(1-\beta)}\log^{1/4}\left(\frac{2}{\gamma}\right).
    \end{split}
    \end{equation}
    From the algorithm, we can see $\ub_t^{i,\mathcal{A}_1}$ solely depends on $\Mb_{1:h}^{i,t}$, which imples that $\ub_t^{i,\mathcal{A}_1} = \ub_t(\Mb_{1:h}^{i,t})$. 

    As for the deviation between $\xb_t^{i,\mathcal{A}_1}$ and $\xb_t(\Mb_{1:h}^{i,t})$, first we observe these two terms can be written as follows
    \begin{equation}\label{Eq3: L Deviation between actual costs and reparameterize costs}
    \begin{split}
        \xb_t(\Mb_{1:h}^{i,t}) &= \sum_{j=1}^t \Ab^{j-1}\wb_{t-j} + \sum_{j=1}^t \Ab^{j-1}\Bb\sum_{h^{\prime}=1}^h \sigma_{h^{\prime}}^{1/4}\Mb_{h^{\prime}}^{i,t} \Wb_{t-j-1:t-j-m}\phi_{h^{\prime}},\\
        \xb_t^{i,\mathcal{A}_1} &= \sum_{j=1}^t \Ab^{j-1}\wb_{t-j} + \sum_{j=1}^t \Ab^{j-1}\Bb\sum_{h^{\prime}=1}^h \sigma_{h^{\prime}}^{1/4}\Mb_{h^{\prime}}^{i,t-j} \Wb_{t-j-1:t-j-m}\phi_{h^{\prime}},
    \end{split}    
    \end{equation}
    upon which we have
    \begin{equation}\label{Eq3: L Deviation between actual costs and reparameterize costs}
    \begin{split}
        \norm{\xb_t^{i,\mathcal{A}_1} - \xb_t(\Mb_{1:h}^{i,t})} &\leq \sum_{j=1}^t \norm{\Ab^{j-1}}\norm{\Bb}\sum_{h^{\prime}=1}^h |\sigma_{h^{\prime}}^{1/4}|\norm{\Mb_{h^{\prime}}^{i,t-j} - \Mb_{h^{\prime}}^{i,t}} \norm{\Wb_{t-j-1:t-j-m}}\\
        &\leq \sum_{j=1}^t \kappa^2 (1-\gamma)^{j-1}\kappa_B\sum_{h^{\prime}=1}^h \left[\log^{1/4}\left(\frac{2}{\gamma}\right)j\frac{36\eta G\kappa_B\kappa^5W^2\sqrt{m}h\sqrt{n}}{\gamma^2(1-\beta)}\log^{1/4}\left(\frac{2}{\gamma}\right) W\sqrt{m}\right]\\
        &\leq \frac{36\eta G\kappa_B^2\kappa^7W^3mh^2\sqrt{n}}{\gamma^2(1-\beta)}\log^{1/2}\left(\frac{2}{\gamma}\right)\sum_{j=1}^t j(1-\gamma)^{j-1}\leq \frac{36\eta G\kappa_B^2\kappa^7W^3mh^2\sqrt{n}}{\gamma^4(1-\beta)}\log^{1/2}\left(\frac{2}{\gamma}\right).
    \end{split}        
    \end{equation}

    Based on the definition of $\mathcal{K}$ and {\color{black} the selection of $T$}, we have both $\norm{\xb_t^{i,\mathcal{A}_1}},\;\norm{\xb_t(\Mb_{1:h}^{i,t})}\leq \frac{4\kappa^3W}{\gamma}$. Then based on Assumption \ref{A: Gradient bound}, we have $\forall j\in[n]$
    \begin{equation}\label{Eq4: L Deviation between actual costs and reparameterize costs}
    \begin{split}
        &|f_{j,t}(\xb_t^{i,\mathcal{A}_1}, \ub_t^{i,\mathcal{A}_1}) - f_{j,t}(\Mb_{1:h}^{i,t}|\Ab,\Bb,\{\wb\})|\\
        = &|f_{j,t}(\xb_t^{i,\mathcal{A}_1}, \ub_t^{i,\mathcal{A}_1}) - f_{j,t}(\xb_t(\Mb_{1:h}^{i,t}), \ub_t(\Mb_{1:h}^{i,t}))|\\
        \leq &\frac{4G\kappa^3W}{\gamma}\norm{\xb_t^{i,\mathcal{A}_1} - \xb_t(\Mb_{1:h}^{i,t})}\\
        \leq &\frac{144\eta G^2\kappa_B^2\kappa^{10}W^4mh^2\sqrt{n}}{\gamma^5(1-\beta)}\log^{1/2}\left(\frac{2}{\gamma}\right)\\
        \leq &\frac{144 G^2\kappa_B^2\kappa^{10}W^4\sqrt{n}}{\gamma^3\sqrt{T}(1-\beta)}\log^{1/2}\left(\frac{2}{\gamma}\right)
    \end{split}         
    \end{equation}

\end{proof}

\begin{lemma}\label{L: Deviation between costs of normal and truncated linear policies}
    Let a linear policy $\Kb\in\Sc$. Then, for $m\geq \frac{1}{\gamma} \log \left(\frac{8G\kappa_B \kappa^8W^2}{\epsilon \gamma^3}\right)$ and $\epsilon \in (0,1)$,
    \begin{equation*}
        \sum_{t=1}^T|f_{i,t}(\xb^{\Kb,m}_t,\ub^{\Kb,m}_t)-f_{i,t}(\xb^{\Kb}_t,\ub^{\Kb}_t)|\leq \frac{\epsilon}{2}T,~~~~\norm{\xb_t^{\Kb,m}}, \norm{\ub_t^{\Kb,m}}\leq \frac{2\kappa^3W}{\gamma},
    \end{equation*}
    $\forall i\in[n]$.
\end{lemma}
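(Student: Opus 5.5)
We have to interpret $\xb^{\Kb,m}_t,\ub^{\Kb,m}_t$: the trajectory under the truncated linear policy. Since $\Kb\in\Sc$ and the dynamics are LTI, the state under $\ub=\Kb\xb$ unrolls as $\xb^{\Kb}_{t}=\sum_{j\ge 1}(\Ab+\Bb\Kb)^{j-1}\wb_{t-j}$ with $\ub^{\Kb}_t=\Kb\xb^{\Kb}_t$. The truncated version keeps only the last $m$ disturbances: $\xb^{\Kb,m}_t=\sum_{j=1}^{m}(\Ab+\Bb\Kb)^{j-1}\wb_{t-j}$ and $\ub^{\Kb,m}_t=\Kb\xb^{\Kb,m}_t$. (Equivalently, writing it as a disturbance-feedback policy relative to the zero policy of Assumption \ref{A: zero is diagonally and strongly stable}, with coefficients built from $\Kb(\Ab+\Bb\Kb)^{j-1}$ cut off at index $m$.) The plan is to bound the norms of both trajectories, bound the truncation gap geometrically in $m$, and then convert this gap into a cost gap using Assumption \ref{A: Gradient bound}.

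\textbf{Norm bounds.} By Definition \ref{D: Diagonalizable Stability}, $(\Ab+\Bb\Kb)^{j-1}=\Hb\Lb^{j-1}\Hb^{-1}$, so $\norm{(\Ab+\Bb\Kb)^{j-1}}\le\kappa^2(1-\gamma)^{j-1}$. Summing the geometric series with $\norm{\wb}\le W$ gives $\norm{\xb^{\Kb}_t},\norm{\xb^{\Kb,m}_t}\le \kappa^2W/\gamma$. Multiplying by $\norm{\Kb}\le\kappa$ gives $\norm{\ub^{\Kb}_t},\norm{\ub^{\Kb,m}_t}\le\kappa^3W/\gamma$. Since $\kappa\ge1$, everything is bounded by $2\kappa^3W/\gamma$, which is the claimed norm bound. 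If the paper's truncated policy is written in terms of $\Bb$ and past controls, an extra factor such as $\kappa_B$ may appear, but the slack factor $2$ together with $\kappa\ge1$ should absorb it.

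\textbf{Truncation gap.} The tail is
\[
\norm{\xb^{\Kb}_t-\xb^{\Kb,m}_t}\le\sum_{j>m}\kappa^2(1-\gamma)^{j-1}W\le \frac{\kappa^2W}{\gamma}(1-\gamma)^m\le\frac{\kappa^2W}{\gamma}e^{-\gamma m},
\]
and the control gap is at most $\kappa$ times this.

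\textbf{Cost gap.} Take $D=2\kappa^3W/\gamma$ in Assumption \ref{A: Gradient bound}. Convexity and the mean value theorem along the segment, which stays in the ball of radius $D$, give
\[
|f_{i,t}(\xb^{\Kb,m}_t,\ub^{\Kb,m}_t)-f_{i,t}(\xb^{\Kb}_t,\ub^{\Kb}_t)|\le GD\big(\norm{\Delta\xb}+\norm{\Delta\ub}\big)\lesssim \frac{G\kappa^6W^2}{\gamma^2}e^{-\gamma m}.
\]
Substituting $m\ge\frac1\gamma\log\frac{8G\kappa_B\kappa^8W^2}{\epsilon\gamma^3}$ gives $e^{-\gamma m}\le \frac{\epsilon\gamma^3}{8G\kappa_B\kappa^8W^2}$. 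The per-step gap is then at most $\epsilon\gamma/(8\kappa^2\kappa_B)\cdot O(1)\le\epsilon/2$, since $\gamma\le1$ and $\kappa,\kappa_B\ge1$. Summing over $t$ yields $\frac{\epsilon}{2}T$.

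\textbf{Main obstacle.} The hard part is fixing the precise definition of the truncated policy, including its initial-condition and boundary terms for $t\le m$ (where $\wb_s=0$ for $s\le0$). The constant bookkeeping must match the $8\kappa_B\kappa^8$ inside the logarithm. I expect this to go through with room to spare, because the stated $m$ carries extra factors of $\kappa^2\kappa_B/\gamma$ beyond what the tail estimate needs.
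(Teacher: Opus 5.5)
\textbf{Gap: the object you bound is not the truncated policy's trajectory.} Your candidate state $\tilde{\xb}_t:=\sum_{j=1}^m(\Ab+\Bb\Kb)^{j-1}\wb_{t-j}$ is the wrong one. The truncated policy is the open-loop controller $\pi^{\textbf{OLOC}}_{\Kb,m}$. Its norm bound is exactly the hypothesis of Lemma~\ref{L: Deviation between costs of SC and truncated linear policies}, and it is the comparator for the spectral controller in the main decomposition. It applies the disturbance-feedback control $\ub^{\Kb,m}_t=\sum_{i=1}^m\Kb(\Ab+\Bb\Kb)^{i-1}\wb_{t-i}$, and your $\ub^{\Kb,m}_t=\Kb\tilde{\xb}_t$ coincides with this. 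Its state, however, is whatever the plant $\xb_{t+1}=\Ab\xb_t+\Bb\ub_t+\wb_t$ then produces:
\[
\xb^{\Kb,m}_t=\sum_{j\ge 1}\Ab^{j-1}\big(\wb_{t-j}+\Bb\ub^{\Kb,m}_{t-j}\big).
\]
This is the same form in which $\xb_t(\Mb_{1:h})$ is written in the proof of Lemma~\ref{L: Deviation between actual costs and reparameterize costs}. Your $\tilde{\xb}_t$ is not this; it is not a trajectory of the plant at all. With $\ub_t=\Kb\tilde{\xb}_t$ one gets $\Ab\tilde{\xb}_t+\Bb\ub_t+\wb_t=\tilde{\xb}_{t+1}+(\Ab+\Bb\Kb)^m\wb_{t-m}$. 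So the ``equivalently'' in your first paragraph is false, and the pair you control cannot play the role the lemma has in the regret decomposition. A warning sign is that your argument never uses Assumption~\ref{A: zero is diagonally and strongly stable}, which is what makes open-loop control of the plant meaningful.

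\textbf{How to repair it.} Keep your control-tail estimate $\norm{\ub^{\Kb}_s-\ub^{\Kb,m}_s}\le\kappa^3W(1-\gamma)^m/\gamma$, which is correct, and propagate it through the open-loop plant. Write $\xb^{\Kb}_t-\xb^{\Kb,m}_t=\sum_{j\ge1}\Ab^{j-1}\Bb(\ub^{\Kb}_{t-j}-\ub^{\Kb,m}_{t-j})$ and use $\norm{\Ab^{j-1}}\le\kappa^2(1-\gamma)^{j-1}$ from Assumption~\ref{A: zero is diagonally and strongly stable}. This gives $\norm{\xb^{\Kb}_t-\xb^{\Kb,m}_t}\le\kappa_B\kappa^5W(1-\gamma)^m/\gamma^2$. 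Bound the control gap by the same quantity and take $D=2\kappa^3W/\gamma$. The per-step cost gap is then at most $\frac{4G\kappa_B\kappa^8W^2}{\gamma^3}(1-\gamma)^m$, which is $\le\frac{\epsilon}{2}$ precisely under the stated $m$. So the factor $\kappa_B\kappa^8/\gamma^3$ inside the logarithm is not slack, as you guessed; it is exactly what this propagation costs. The paper states the lemma without proof, importing it from the centralized analysis, but this constant pins down the intended argument.

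\textbf{The state norm bound also needs the choice of $m$.} A direct geometric sum gives only $\kappa^2W/\gamma+\kappa_B\kappa^5W/\gamma^2$, which generally exceeds $2\kappa^3W/\gamma$. Instead use
\[
\norm{\xb^{\Kb,m}_t}\le\norm{\xb^{\Kb}_t}+\norm{\xb^{\Kb}_t-\xb^{\Kb,m}_t}\le\kappa^2W/\gamma+\epsilon\gamma/(8G\kappa^3W).
\]
This stays below $2\kappa^3W/\gamma$ given the choice of $m$ and a mild normalization such as $G\ge 1/8$. Your bound $\norm{\ub^{\Kb,m}_t}\le\kappa^3W/\gamma$ is fine as is.
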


\begin{lemma}\label{L: Deviation between costs of SC and truncated linear policies}
    For every open loop optimal controller $\pi^{\textbf{OLOC}}_{\Kb,m}$ such that $\Kb\in\Sc$ and $\norm{\xb_t^{\Kb,m}}, \norm{\ub_t^{\Kb,m}}\leq \frac{2\kappa^3W}{\gamma}$, there exists a spectral controller $\pi^{\textbf{SC}}_{h,m,\gamma,\Mb}$ with $\Mb\in\mathcal{K}$ such that:
    \begin{equation*}
        \sum_{t=1}^T|f_{i,t}(\xb^{\Mb}_t,\ub^{\Mb}_t)-f_{i,t}(\xb^{\Kb,m}_t,\ub^{\Kb,m}_t)|\leq \frac{\epsilon}{2}T
    \end{equation*}
    for any $\epsilon\in(0,1)$ and $h\geq 2\log T\log\left(\frac{600G\kappa_B\kappa^8W^2\sqrt{md}}{\epsilon \gamma^{5/2}}\log T\log^{1/4}\left(\frac{2}{\gamma}\right)\right)$.
\end{lemma}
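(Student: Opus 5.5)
We prove the statement by constructing the spectral parameters explicitly from $\Kb$, following the centralized approximation argument of \cite{brahmbhatt2025new}. Under Assumption \ref{A: zero is diagonally and strongly stable}, the open-loop truncated controller $\pi^{\textbf{OLOC}}_{\Kb,m}$ plays
\[
\ub_t^{\Kb,m}=\sum_{k=1}^m \Kb(\Ab+\Bb\Kb)^{k-1}\wb_{t-k}=\Kb\Hb_{\Kb}\sum_{k=1}^m \Lb^{k-1}\Hb_{\Kb}^{-1}\wb_{t-k},
\]
where we write $\Hb_{\Kb}$ for the diagonalizing matrix of Definition \ref{D: Diagonalizable Stability}, to distinguish it from the Hankel matrix $\Hb$. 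Since $\Lb=\mathrm{diag}(\lambda_1,\ldots,\lambda_{d_1})$ with $\lambda_\ell\in[0,1-\gamma]$, this can be rewritten as
\[
\ub_t^{\Kb,m}=\sum_{\ell=1}^{d_1}\Kb\Hb_{\Kb}\eb_\ell\eb_\ell^\top\Hb_{\Kb}^{-1}\Wb_{t-1:t-m}\mu(\lambda_\ell),
\qquad \mu(\lambda)=(1,\lambda,\ldots,\lambda^{m-1})^\top .
\]
The key fact is that each $\mu(\lambda)$ with $\lambda\in[0,1-\gamma]$ lies close to the span of the top eigenvectors of $\Hb$. Indeed, $\Hb=\int_0^{1-\gamma}\mu(\lambda)\mu(\lambda)^\top d\lambda$, and its eigenvalues decay geometrically, $\sigma_j\lesssim \exp(-j/\log T)$ up to $\log(1/\gamma)$ factors, which is the Hilbert-type spectral decay used in \cite{brahmbhatt2025new}.

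The construction proceeds in three steps.

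1. Expand $\mu(\lambda_\ell)=\sum_{j=1}^m (\phi_j^\top\mu(\lambda_\ell))\phi_j$ and define
\[
\Mb_j=\sigma_j^{-1/4}\sum_{\ell}\Kb\Hb_{\Kb}\eb_\ell\eb_\ell^\top\Hb_{\Kb}^{-1}\,\bigl(\phi_j^\top\mu(\lambda_\ell)\bigr),\qquad j\in[h].
\]
With this choice the spectral controller reproduces exactly the projection of the OLOC onto the top $h$ filters.

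2. Bound the coefficients using $|\phi_j^\top\mu(\lambda)|^2\le \sigma_j\cdot\sup_\lambda(\cdots)$. This is handled by the standard lemma that $\sum_j(\phi_j^\top\mu(\lambda))^2/\sigma_j$ is controlled by $\log(2/\gamma)$-type quantities. It yields $\norm{\Mb_j}\lesssim\kappa^3\sigma_j^{1/4}\log^{1/4}(2/\gamma)/\gamma^{1/2}$, hence $\norm{\Mb_{1:h}}\le\kappa^3\sqrt{2h/\gamma}$, matching the second condition in $\mathcal{K}$.

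3. Bound the truncation error. The residual control is
\[
\ub_t^{\Kb,m}-\ub_t^{\Mb}=\sum_\ell(\cdots)\Wb\sum_{j>h}(\phi_j^\top\mu(\lambda_\ell))\phi_j,
\]
whose norm is at most $\kappa^3W\sqrt{md}\,\bigl(\sum_{j>h}\sigma_j\bigr)^{1/2}$ up to constants. The state residual is this quantity propagated through $\Ab^{k}\Bb$ and summed, which multiplies it by $\kappa^2\kappa_B/\gamma$. The decay of $\sigma_j$ together with the stated lower bound on $h$ makes the residual at most $\epsilon\gamma/(8G\kappa^3W)$ or so.

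With the residuals in hand, the cost bound follows directly. Since $\norm{\xb^{\Kb,m}},\norm{\ub^{\Kb,m}}\le 2\kappa^3W/\gamma$ and the residuals are small, both trajectories stay in a ball of radius $3\kappa^3W/\gamma$. This gives the first condition for $\Mb\in\mathcal{K}$. Assumption \ref{A: Gradient bound} then gives the Lipschitz estimate
\[
|f_{i,t}(\xb^{\Mb}_t,\ub^{\Mb}_t)-f_{i,t}(\xb^{\Kb,m}_t,\ub^{\Kb,m}_t)|\le \frac{3G\kappa^3W}{\gamma}\bigl(\norm{\Delta\xb_t}+\norm{\Delta\ub_t}\bigr)\le\frac{\epsilon}{2}.
\]
Summing over $t$ gives the claimed $\frac{\epsilon}{2}T$, and the argument holds uniformly in $i$.

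The main obstacle is step 3, specifically obtaining the tail bound $\sum_{j>h}\sigma_j\le \exp(-h/(2\log T))\cdot\mathrm{poly}$ for this $\gamma$-modified Hankel matrix with only logarithmic dependence on $1/\gamma$. For this I would first invoke the eigenvalue decay result for $\Hb$ from \cite{brahmbhatt2025new} (a Beckermann–Townsend style bound on the displacement rank). I would then check that the stated $h$, with its $\log T\log(\cdot)$ form, makes $\sqrt{\sum_{j>h}\sigma_j}$ beat the factor $600G\kappa_B\kappa^8W^2\sqrt{md}\log T\log^{1/4}(2/\gamma)/(\epsilon\gamma^{5/2})$.
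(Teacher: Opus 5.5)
Your architecture matches what the paper does. The paper states this lemma without proof, importing it from the centralized analysis. In the proof of the main theorem it builds $\Mb^*$ exactly as in your Step 1, and your $\sigma_j^{-1/4}$ weight and trailing $\Hb_{\Kb}^{-1}$ are the right normalization. There is, however, a genuine gap in Step 2, and Step 3 rests on the same error.

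The ``standard lemma'' you invoke is false. The quantity $\sum_j(\phi_j^\top\mu(\lambda))^2/\sigma_j$ equals $\mu(\lambda)^\top\Hb^{-1}\mu(\lambda)$, and since $\int_0^{1-\gamma}\mu\mu^\top d\lambda=\Hb$,
\[
\int_0^{1-\gamma}\mu(\lambda)^\top\Hb^{-1}\mu(\lambda)\,d\lambda=\mathrm{tr}(\Hb^{-1}\Hb)=m .
\]
So its supremum over $[0,1-\gamma]$ is at least $m/(1-\gamma)$. At $\lambda=1-\gamma$ it is the Legendre Christoffel--Darboux value $m^2/(1-\gamma)$. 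Because $m\approx\gamma^{-1}\log(\sqrt{T}\cdots)$, this grows with $T$; it is not a $\log(2/\gamma)$-type quantity. Hence there is no square-root decay $|\phi_j^\top\mu(\lambda)|\lesssim\sigma_j^{1/2}$ with an $m$-independent constant. Neither your bound $\norm{\Mb_j}\lesssim\kappa^3\sigma_j^{1/4}\log^{1/4}(2/\gamma)/\sqrt{\gamma}$ nor your residual estimate $\kappa^3W\sqrt{md}\,(\sum_{j>h}\sigma_j)^{1/2}$ is justified.

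What is true, and what the paper uses (its lemma on $|\mu_\alpha^\top\phi_j|$), is a fourth-root decay: $|\phi_j^\top\mu(\lambda)|\le\sqrt{2/\gamma}\,\sigma_j^{1/4}$ on $[0,1-\gamma]$. Up to the constant, it follows from three facts about $g_j(\lambda)=\phi_j^\top\mu(\lambda)$:
\begin{itemize}
\item $\norm{g_j}_{L^2}^2=\phi_j^\top\Hb\phi_j=\sigma_j$;
\item $\norm{g_j'}_{L^2}\le(\int_0^{1-\gamma}\norm{\mu'(\lambda)}^2d\lambda)^{1/2}=O(1/\gamma)$;
\item $g_j(\lambda)^2\le g_j(\lambda_0)^2+2\norm{g_j}_{L^2}\norm{g_j'}_{L^2}$, where $\lambda_0$ is a point with $g_j(\lambda_0)^2\le\sigma_j/(1-\gamma)$.
\end{itemize}

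This bound is why the controller carries the weight $\sigma_j^{1/4}$. It gives $\norm{\Mb_j}\le\kappa^3\sqrt{2/\gamma}$ uniformly in $j$, hence $\norm{\Mb_{1:h}}\le\kappa^3\sqrt{2h/\gamma}$ directly.

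For the tail, bound $\norm{\sum_{j>h}(\phi_j^\top\mu)\phi_j}\le\sum_{j>h}|\phi_j^\top\mu|$. Combined with the eigenvalue-decay lemma, this is at most $\frac{30}{\sqrt{\gamma}}\log^{1/4}(2/\gamma)\sum_{j>h}e^{-\pi^2j/(16\log T)}\le\frac{30}{\sqrt{\gamma}}\log^{1/4}(2/\gamma)\frac{16\log T}{\pi^2}e^{-\pi^2h/(16\log T)}$. Now multiply by three factors:
\begin{itemize}
\item $\kappa^3W\sqrt{md}$ for the control residual;
\item $2\kappa^2\kappa_B/\gamma$ to propagate it to the state;
\item $3G\kappa^3W/\gamma$ for the Lipschitz step.
\end{itemize}
This reproduces the factor $\kappa^8W^2\sqrt{md}\log T\log^{1/4}(2/\gamma)/\gamma^{5/2}$ in the stated threshold. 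The constant $3\cdot2\cdot30\cdot\frac{16}{\pi^2}\cdot2$ is below $600$, and the prefactor $2\log T$ dominates $16\log T/\pi^2$. Your $\sigma_j$-based route would not produce this threshold. The real crux is therefore this per-coefficient fourth-root bound, not the eigenvalue decay, which you can import.
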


\begin{lemma}\label{L: Bounded states and actions}
    For any $\Kb\in \Sc$, the corresponding states $\xb_t^{\Kb}$ and control inputs $\ub_t^{\Kb}$ are bounded as follows,
    \begin{equation*}
        \norm{\xb^{\Kb}_t}\leq \frac{\kappa^2 W}{\gamma}, \norm{\ub^{\Kb}_t}\leq \frac{\kappa^3 W}{\gamma}.
    \end{equation*}
\end{lemma}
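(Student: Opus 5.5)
The statement to prove is Lemma \ref{L: Bounded states and actions}: the states and inputs generated by any $\Kb\in\Sc$ are bounded. The plan is to unroll the closed-loop dynamics under $\ub_t=\Kb\xb_t$, namely $\xb_{t+1}=(\Ab+\Bb\Kb)\xb_t+\wb_t$. Taking the initial state $\xb_1=0$, as is standard in this line of work, and writing $\xb^{\Kb}_t$ for the resulting state, this gives
\begin{equation*}
\xb^{\Kb}_{t}=\sum_{j=1}^{t-1}(\Ab+\Bb\Kb)^{j-1}\wb_{t-j}.
\end{equation*}
The two bounds then follow from controlling powers of the closed-loop matrix.

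First, use Definition \ref{D: Diagonalizable Stability} to write $\Ab+\Bb\Kb=\Hb\Lb\Hb^{-1}$. Then $(\Ab+\Bb\Kb)^{j-1}=\Hb\Lb^{j-1}\Hb^{-1}$, and therefore
\begin{equation*}
\norm{(\Ab+\Bb\Kb)^{j-1}}\leq \norm{\Hb}\norm{\Lb}^{j-1}\norm{\Hb^{-1}}\leq \kappa^2(1-\gamma)^{j-1}.
\end{equation*}
Second, combine this with $\norm{\wb_t}\leq W$ from Assumption \ref{A: Bounded B and noises} and the triangle inequality. The geometric series gives
\begin{equation*}
\norm{\xb^{\Kb}_t}\leq \kappa^2 W\sum_{j\geq 1}(1-\gamma)^{j-1}=\frac{\kappa^2W}{\gamma}.
\end{equation*}
Third, the input bound follows from $\norm{\ub^{\Kb}_t}=\norm{\Kb\xb^{\Kb}_t}\leq \norm{\Kb}\norm{\xb^{\Kb}_t}\leq \kappa\cdot\frac{\kappa^2W}{\gamma}=\frac{\kappa^3W}{\gamma}$.

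There is no real obstacle here. The one point needing care is the initial condition. If $\xb_1\neq 0$, an extra transient term $\kappa^2(1-\gamma)^{t-1}\norm{\xb_1}$ appears, and it is not covered by the stated bound. I would therefore state explicitly that $\xb_1=0$, matching the expansion used in the proof of Lemma \ref{L: Deviation between actual costs and reparameterize costs}, where the state is a sum over past disturbances only.

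One might also worry about the index range of the disturbances. With $\xb_1=0$ the state at time $t$ involves only $\wb_1,\ldots,\wb_{t-1}$, and extending the sum to infinity only loosens the bound, so the series step is safe.
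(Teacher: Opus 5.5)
Your proof is correct and is the standard argument for this lemma. The paper states the lemma without proof, taking it from \cite{brahmbhatt2025new}, and the intended reasoning is exactly yours: unroll $\xb_{t+1}=(\Ab+\Bb\Kb)\xb_t+\wb_t$, bound $\norm{(\Ab+\Bb\Kb)^{j}}\leq\kappa^2(1-\gamma)^{j}$ via $\Hb\Lb^{j}\Hb^{-1}$, sum the geometric series, and use $\norm{\Kb}\leq\kappa$ for the input, with your zero-initial-state caveat matching the convention implicit in the paper's own state expansions.
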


\begin{lemma}\label{L: Lipschitz constant of the reparameterized costs}
    For any $\Mb_{1:h},\;\Mb^{\prime}_{1:h} \in \mathcal{K}$, it holds that $\forall t,i$,
    \begin{equation*}
        |f_{i,t}(\Mb_{1:h}|\Ab,\Bb,\{\wb\}) - f_{i,t}(\Mb_{1:h}^{\prime}|\Ab,\Bb,\{\wb\})|\leq \frac{6G\kappa_B\kappa^5W^2\sqrt{m}h}{\gamma^2}\log^{1/4}\left(\frac{2}{\gamma}\right)\norm{\Mb_{1:h} - \Mb^{\prime}_{1:h}}.        
    \end{equation*}
\end{lemma}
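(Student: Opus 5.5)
The plan is to reduce the claim to a Lipschitz bound for the map $\Mb_{1:h}\mapsto(\xb_t(\Mb_{1:h}),\ub_t(\Mb_{1:h}))$ and then combine it with Assumption \ref{A: Gradient bound}. Fix $t,i$ and $\Mb_{1:h},\Mb'_{1:h}\in\mathcal{K}$. By the definition of $\mathcal{K}$, both trajectories satisfy $\norm{\xb_t},\norm{\ub_t}\leq \frac{3\kappa^3W}{\gamma}=:D$. Since $f_{i,t}$ is convex with gradients bounded by $GD$ on this ball, the mean value theorem gives
\begin{equation*}
|f_{i,t}(\Mb_{1:h}|\Ab,\Bb,\{\wb\})-f_{i,t}(\Mb'_{1:h}|\Ab,\Bb,\{\wb\})|\leq GD\left(\norm{\xb_t(\Mb_{1:h})-\xb_t(\Mb'_{1:h})}+\norm{\ub_t(\Mb_{1:h})-\ub_t(\Mb'_{1:h})}\right).
\end{equation*}
The remaining work is to bound the two differences by a multiple of $\norm{\Mb_{1:h}-\Mb'_{1:h}}$.

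For the control, linearity gives $\ub_t(\Mb)-\ub_t(\Mb')=\sum_{k=1}^h\sigma_k^{1/4}(\Mb_k-\Mb'_k)\Wb_{t-1:t-m}\phi_k$. I will use $\norm{\phi_k}=1$ and $\norm{\Wb_{t-1:t-m}}\leq \norm{\Wb}_F\leq W\sqrt{m}$. For the eigenvalues, the bound $\sigma_k\leq \norm{\Hb}$ needs a Hilbert-type estimate for the Hankel matrix \eqref{Eq: Hankel matrix}. Its entries are $\int_0^{1-\gamma}x^{i+j-2}\,dx$, so $\Hb$ is a Gram matrix and $\mathrm{tr}\,\Hb=\sum_i\frac{(1-\gamma)^{2i-1}}{2i-1}\leq \frac12\log\frac{2-\gamma}{\gamma}\leq\log\frac{2}{\gamma}$. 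This gives $\sigma_k^{1/4}\leq\log^{1/4}(2/\gamma)$, which produces the $\log^{1/4}(2/\gamma)$ factor. Summing over $k$ and using $\norm{\Mb_k-\Mb'_k}\leq\norm{\Mb_{1:h}-\Mb'_{1:h}}$ yields $\norm{\Delta\ub_t}\leq hW\sqrt{m}\log^{1/4}(2/\gamma)\norm{\Mb-\Mb'}$.

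For the state, I unroll the dynamics under a fixed $\Mb$, exactly as in the expansion used in the proof of Lemma \ref{L: Deviation between actual costs and reparameterize costs}. The difference is $\sum_{j=1}^t\Ab^{j-1}\Bb\,(\ub_{t-j}(\Mb)-\ub_{t-j}(\Mb'))$, since the noise terms cancel. Assumption \ref{A: zero is diagonally and strongly stable} gives $\norm{\Ab^{j-1}}\leq\kappa^2(1-\gamma)^{j-1}$, and $\norm{\Bb}\leq\kappa_B$. Summing the geometric series bounds the state deviation by $\frac{\kappa^2\kappa_B}{\gamma}hW\sqrt{m}\log^{1/4}(2/\gamma)\norm{\Mb-\Mb'}$. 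Adding the control term, and using $\kappa,\kappa_B\geq1$ and $\gamma\leq1$, the total is at most $\frac{2\kappa^2\kappa_B}{\gamma}hW\sqrt{m}\log^{1/4}(2/\gamma)\norm{\Mb-\Mb'}$. Multiplying by $GD=3G\kappa^3W/\gamma$ gives exactly $\frac{6G\kappa_B\kappa^5W^2\sqrt m h}{\gamma^2}\log^{1/4}(2/\gamma)$.

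The main obstacle is the eigenvalue bound $\sigma_k\leq\log(2/\gamma)$, specifically getting the clean constant from the trace/Gram representation. The remaining steps are routine bookkeeping. One subtlety is that the mean value step needs the segment between the two trajectories to stay in the radius-$D$ ball. This holds because the ball is convex and both endpoints lie in it, so the segment is contained in it.
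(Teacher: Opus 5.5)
The paper states this lemma without proof (it is one of the supplementary lemmas carried over from the centralized analysis of \cite{brahmbhatt2025new}), so there is no argument to compare against line by line. Your proof is correct and self-contained, and it reproduces the stated constant exactly: $GD$ with $D=3\kappa^3W/\gamma$ from the definition of $\mathcal{K}$, times $\bigl(1+\kappa^2\kappa_B/\gamma\bigr)hW\sqrt{m}\log^{1/4}(2/\gamma)\leq \frac{2\kappa^2\kappa_B}{\gamma}hW\sqrt{m}\log^{1/4}(2/\gamma)$.

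It uses the same ingredients the paper uses in the proof of Lemma \ref{L: Deviation between actual costs and reparameterize costs}: unrolling the state, $\norm{\Ab^{j-1}}\leq\kappa^2(1-\gamma)^{j-1}$, $\norm{\Wb_{t-1:t-m}}\leq W\sqrt{m}$, and $\sigma^{1/4}\leq\log^{1/4}(2/\gamma)$. Your trace bound for the positive semidefinite Gram matrix $\Hb$ cleanly justifies the uniform eigenvalue bound. The paper only asserts that bound in Lemma \ref{L: Bound of the eigenvalues of the Hankel matrix}, and its exponential estimate alone does not give it for small $j$.
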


\begin{lemma}\label{L: Bound of the eigenvalues of the Hankel matrix}
    Let $\sigma_j$ be the $j^{th}$ top singular value of $\Hb_m$. Then, for all $T\geq 10$, we have
    \begin{equation*}
        \sigma_j\leq 156800\log \left(\frac{2}{\gamma}\right)\cdot \exp\left(-\frac{\pi^2j}{4\log T}\right)\leq \frac{1}{2}\log\left(\frac{2}{\gamma}\right).
    \end{equation*}
\end{lemma}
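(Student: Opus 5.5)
The plan is to use the integral representation of $\Hb_m$: it shows that $\Hb_m$ is a positive semidefinite Hankel matrix with a controlled trace. A general singular-value decay result for such matrices then gives the first inequality, and the trace bound gives the second.

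\textbf{Step 1 (PSD structure).} Set $\mu_m(x)=[1,x,\ldots,x^{m-1}]^\top$. Since $\frac{(1-\gamma)^{i+j-1}}{i+j-1}=\int_0^{1-\gamma}x^{i+j-2}\,dx$, we have
\begin{equation*}
\Hb_m=\int_0^{1-\gamma}\mu_m(x)\mu_m(x)^\top dx \succeq 0 .
\end{equation*}
So the singular values of $\Hb_m$ coincide with its eigenvalues, and $\Hb_m$ is a positive definite Hankel matrix.

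\textbf{Step 2 (trace bound).} Every eigenvalue is at most the trace, so
\begin{equation*}
\sigma_j\le\sigma_1\le\operatorname{tr}(\Hb_m)=\sum_{i=1}^m\frac{(1-\gamma)^{2i-1}}{2i-1}\le\sum_{k\ \mathrm{odd}}\frac{(1-\gamma)^k}{k}=\frac12\log\frac{2-\gamma}{\gamma}\le\frac12\log\left(\frac{2}{\gamma}\right).
\end{equation*}
This is the right-hand inequality of the statement, since that bound is the crude one valid for all $j$. It also supplies the prefactor $\log(2/\gamma)$ in the first inequality.

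\textbf{Step 3 (exponential decay).} I would invoke the Beckermann--Townsend bound for real positive definite Hankel matrices, which is the same tool used for the Hilbert-type matrix in spectral filtering (Hazan--Singh--Zhang). It says
\begin{equation*}
\sigma_{1+2k}(\Hb_m)\le 16\left[\exp\left(\frac{\pi^2}{4\log(8\lfloor m/2\rfloor/\pi)}\right)\right]^{-2k+2}\sigma_1(\Hb_m).
\end{equation*}
The argument behind this bound is that $\Hb_m$ has displacement rank $2$ with respect to the real-spectrum shift operators, and Zolotarev numbers control the rational approximation.

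Next I replace $\log(8\lfloor m/2\rfloor/\pi)$ by $\log T$. This uses $m\le T$, which holds for the chosen $m=\lceil\frac1\gamma\log(\cdot)\rceil$ and $T\ge 10$; any slack is absorbed into constants. Writing $j=1+2k$ (and using monotonicity for even $j$), the exponent becomes $-\pi^2(j-3)/(4\log T)$. The factor $\exp(3\pi^2/(4\log T))$ is bounded by a constant for $T\ge10$. Multiplying this constant by $16$ and by $\sigma_1\le\frac12\log(2/\gamma)$ from Step 2 gives an absolute constant times $\log(2/\gamma)\exp(-\pi^2 j/(4\log T))$.

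\textbf{Main obstacle.} The hard part is Step 3: justifying the replacement of $\log m$ by $\log T$, and tracking constants so that they stay below $156800$. My fallback is to follow Hazan et al.'s derivation, which yields $235200\log T$ for the integral over $[0,1]$, with $\sigma_1$ replaced by our trace bound. If the direct Beckermann--Townsend route gives an awkward constant, I would instead argue $\Hb_m\preceq Z_m$ in Loewner order, where $Z_m$ is the $[0,1]$ version, and transfer decay bounds from $Z_m$. This handles only the exponential factor; the $\log(2/\gamma)$ prefactor still comes from Step 2.
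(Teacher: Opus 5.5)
Steps 1 and 2 are correct, and applying Beckermann--Townsend directly to $\Hb_m$ (itself a real positive definite Hankel matrix) is the right route. The paper states this lemma without proof, so the only question is whether your Step 3 goes through, and as written it does not. Put $L_m=\log(8\lfloor m/2\rfloor/\pi)$ and $\rho=e^{\pi^2/(4L_m)}$. For odd $j\ge 3$ you need $\rho^{3-j}\le e^{-\pi^2(j-3)/(4\log T)}$. For $j\ge5$ this is equivalent to $L_m\le\log T$, i.e.\ $8\lfloor m/2\rfloor\le\pi T$. Your condition $m\le T$ does not imply this: for $m=T=10$, $40/\pi>10$. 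The leftover also cannot be ``absorbed into constants''. When $L_m>\log T$, the B--T bound exceeds the target by the factor $\exp\bigl(\tfrac{\pi^2(j-3)}{4}\bigl(\tfrac{1}{\log T}-\tfrac{1}{L_m}\bigr)\bigr)$, which grows exponentially in $j$. Nor is $m\le T$ automatic for the chosen $m$. The value $m=\lceil\frac1\gamma\log(8G\kappa_B\kappa^8W^2\sqrt T/\gamma^3)\rceil$ is far larger than $T$ when $\gamma$ is small compared with $1/T$; with $G\ge1$, already $T=10$ and $\gamma=10^{-2}$ give $m>1000$.

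Some hypothesis tying $m$ to $T$ is unavoidable; it is not an artifact of your proof. Take $T$ fixed, $m\ge1/\gamma$ and $\gamma\to0$. Comparing the leading $\lfloor1/\gamma\rfloor$ block of $\Hb_m$ with the Hilbert matrix (interlacing plus Ostrowski's theorem) shows that $\Hb_m$ has order $\log(1/\gamma)$ eigenvalues above an absolute constant. The claimed bound allows only $O(\log\log(1/\gamma))$ of them.

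The repair is to add the hypothesis $8\lfloor m/2\rfloor\le\pi T$, for example $m\le\frac{\pi}{4}T$, which the paper's $m$ satisfies once $T$ is large relative to $\frac1\gamma\log(\cdot)$. Then nothing needs absorbing:
B--T gives $\sigma_j\le16e^{-\pi^2(j-3)/(4\log T)}\sigma_1$ for odd $j\ge3$.
For even $j\ge4$, use $\sigma_j\le\sigma_{j-1}$; this costs $e^{\pi^2/\log T}$, not the $e^{3\pi^2/(4\log T)}$ you wrote.
For $j\le2$, use $\sigma_j\le\sigma_1$.
Combined with Step 2, this gives $\sigma_j\le8e^{\pi^2/\log T}\log(2/\gamma)\,e^{-\pi^2j/(4\log T)}<600\log(2/\gamma)\,e^{-\pi^2j/(4\log T)}$ for $T\ge10$. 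So the constant $156800$ was never the obstacle.

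Finally, Step 2 does not prove the statement's right-hand inequality as written. Read literally, the chain asserts $156800\,e^{-\pi^2j/(4\log T)}\le\frac12$, which fails for small $j$: at $j=1$ the left side exceeds $5\cdot10^4$ for every $T\ge10$. What you correctly prove is the separate bound $\sigma_j\le\frac12\log(2/\gamma)$. Say explicitly that the lemma must be read as the minimum of the two bounds.
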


\begin{lemma}\label{L: Bound on the inner product between a geometric vector and the eigenvector of Hankel matrix}
    Let $\{\phi_j\}$ be the eigenvectors of $\Hb_m$. Then for all $j\in [m],\alpha \in [0,1-\gamma]$ and $\gamma\leq 2/3$,
    \begin{equation*}
        |\mu_{\alpha}^{\top}\phi_j|\leq \sqrt{\frac{2}{\gamma}}\sigma^{1/4}\leq \frac{30}{\sqrt{\gamma}}\log^{1/4}\left(\frac{2}{\gamma}\right)\exp\left(-\frac{\pi^2j}{16\log T}\right). 
    \end{equation*}
\end{lemma}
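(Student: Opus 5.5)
The plan is to reduce the pointwise bound on $\mu_\alpha^\top\phi_j$ to the integral identity behind the Hankel matrix, and then apply Lemma \ref{L: Bound of the eigenvalues of the Hankel matrix}. Write $\mu_\alpha=(1,\alpha,\alpha^2,\ldots,\alpha^{m-1})^\top$. From \eqref{Eq: Hankel matrix},
\begin{equation*}
[\Hb_m]_{ij}=\frac{(1-\gamma)^{i+j-1}}{i+j-1}=\int_0^{1-\gamma}\alpha^{i+j-2}\,d\alpha ,
\end{equation*}
so that $\Hb_m=\int_0^{1-\gamma}\mu_\alpha\mu_\alpha^\top d\alpha$. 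Consequently, for the unit eigenvector $\phi_j$,
\begin{equation*}
\sigma_j=\phi_j^\top\Hb_m\phi_j=\int_0^{1-\gamma} g(\alpha)\,d\alpha, \qquad g(\alpha):=(\mu_\alpha^\top\phi_j)^2\ge 0 .
\end{equation*}
So the problem is to convert the small integral of the nonnegative function $g$ into a pointwise bound, using smoothness of $g$.

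First, bound the derivative of $g$. We have $g'(\alpha)=2(\mu_\alpha^\top\phi_j)(\mu_\alpha'^\top\phi_j)$, hence $|g'|\le 2\norm{\mu_\alpha}\norm{\mu'_\alpha}$. For $\alpha\le 1-\gamma$:
\begin{itemize}
\item $\norm{\mu_\alpha}^2\le \frac{1}{1-\alpha^2}\le\frac1\gamma$;
\item $\norm{\mu'_\alpha}^2=\sum_k k^2\alpha^{2(k-1)}\le \frac{1+\alpha^2}{(1-\alpha^2)^3}\le \frac{2}{\gamma^3}$.
\end{itemize}
This gives the Lipschitz constant $L:=\frac{2\sqrt2}{\gamma^2}$ for $g$ on $[0,1-\gamma]$.

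Second, use a triangle argument. Since $g\ge0$ is $L$-Lipschitz, on one side of $\alpha$ it stays above the line $g(\alpha)-L|s-\alpha|$ over a length $\min\{g(\alpha)/L,\;(1-\gamma)/2\}$.
\begin{itemize}
\item In the generic case $g(\alpha)/L\le(1-\gamma)/2$, this yields $\sigma_j\ge g(\alpha)^2/(2L)$. Hence $|\mu_\alpha^\top\phi_j|=g^{1/2}\le(2L\sigma_j)^{1/4}=(4\sqrt2)^{1/4}\gamma^{-1/2}\sigma_j^{1/4}\le\sqrt{2/\gamma}\,\sigma_j^{1/4}$.
\item In the truncated case, $\sigma_j\ge g(\alpha)(1-\gamma)/4\ge g(\alpha)/12$, using $\gamma\le2/3$. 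Combining this with $g\le\norm{\mu_\alpha}^2\le 1/\gamma$ gives $g\le\sqrt{g}\sqrt{g}\le\sqrt{12\sigma_j}\,\gamma^{-1/2}$. Therefore $g^{1/2}\le 12^{1/4}\gamma^{-1/4}\sigma_j^{1/4}\le\sqrt{2/\gamma}\,\sigma_j^{1/4}$, since $\gamma\le1$.
\end{itemize}
The main obstacle is this step: getting the constants so that both the generic and the boundary case fit under $\sqrt{2/\gamma}$. If the constants turn out tight, I would fall back to using the full interval length $1-\gamma\ge1/3$ more carefully.

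Finally, the second inequality follows by inserting Lemma \ref{L: Bound of the eigenvalues of the Hankel matrix}:
\begin{equation*}
\sigma_j^{1/4}\le 156800^{1/4}\log^{1/4}\left(\frac2\gamma\right)\exp\left(-\frac{\pi^2 j}{16\log T}\right),
\end{equation*}
with $156800^{1/4}<20$. Then $\sqrt2\cdot 20/\sqrt\gamma<30/\sqrt\gamma$, which gives the stated bound.
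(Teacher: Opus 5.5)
\textbf{There is a genuine gap in your constants.} Your strategy is the standard one for this lemma: write $\Hb_m=\int_0^{1-\gamma}\mu_\alpha\mu_\alpha^\top d\alpha$, bound the Lipschitz constant of $g=(\mu_\alpha^\top\phi_j)^2$, then use a triangle argument. The paper itself imports this lemma without proof. However, the two numerical comparisons that are supposed to close your argument are false, so the first inequality is not established as written.

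\emph{Generic case.} You get $(2L)^{1/4}=(4\sqrt2)^{1/4}\gamma^{-1/2}=2^{5/8}\gamma^{-1/2}$, and $2^{5/8}\approx 1.54>\sqrt2$. This is not cosmetic: it propagates, since $2^{5/8}\cdot 156800^{1/4}\approx 30.7>30$, so even the final bound would not follow.

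\emph{Truncated case.} The comparison $12^{1/4}\gamma^{-1/4}\le\sqrt2\,\gamma^{-1/2}$ is equivalent to $\gamma\le 1/3$, which $\gamma\le 1$ does not give. With your $L=2\sqrt2/\gamma^2$ and $g\le 1/\gamma$, the truncated case can only arise when $\gamma>2-\sqrt2\approx 0.59$, which is exactly where this comparison fails. The half-interval bookkeeping is intrinsically too lossy near $\gamma=2/3$. Reaching the constant $\sqrt{2/\gamma}$ forces $L\le 2/\gamma^2$, and then $g/L\le(1-\gamma)/2$ is guaranteed only for $\gamma\le 2-\sqrt2$. The fallback bound $\sigma_j\ge g(1-\gamma)/4$ does not suffice there either.

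\textbf{How to repair it.} Two steps need tightening.

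First, keep the factor $2-\gamma$. On $[0,1-\gamma]$ one has $1-\alpha^2\ge\gamma(2-\gamma)$, so $|g'(\alpha)|\le 2\sqrt{1+\alpha^2}/(1-\alpha^2)^2\le 2\sqrt{1+(1-\gamma)^2}/(\gamma^2(2-\gamma)^2)\le 2/\gamma^2$. The last step uses $\sqrt{1+u^2}\le(1+u)^2$ with $u=1-\gamma$. Take $L=2/\gamma^2$, for which $(2L)^{1/4}=\sqrt{2/\gamma}$ exactly.

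Second, use the whole interval instead of one side. With $c=g(\alpha_0)$ you have $g\ge\max\{0,\,c-L|s-\alpha_0|\}$. If the half-width $w=c/L$ satisfies $w\le 1-\gamma$, the pieces of the tent cut off at the two ends are triangles with bases $x,y\ge 0$ and $x+y\le w$. The lost area is therefore at most $\frac L2(x^2+y^2)\le\frac L2 w^2$, which gives $\sigma_j\ge c^2/L-c^2/(2L)=c^2/(2L)$.

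The condition $w\le 1-\gamma$ always holds. Indeed $c\le\norm{\mu_{\alpha_0}}^2\le 1/(\gamma(2-\gamma))$ gives $w\le\gamma/(2(2-\gamma))$. Moreover $\gamma/(2(2-\gamma))\le 1-\gamma$ if and only if $2\gamma^2-7\gamma+4\ge 0$, which holds for $\gamma\le(7-\sqrt{17})/4\approx 0.72$. This is precisely where the hypothesis $\gamma\le 2/3$ enters, and no truncated case is needed.

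You then get $c\le 2\sigma_j^{1/2}/\gamma$, i.e.\ $|\mu_{\alpha_0}^\top\phi_j|\le\sqrt{2/\gamma}\,\sigma_j^{1/4}$. Your last step then goes through: use the eigenvalue bound for $T\ge 10$ and note $\sqrt2\cdot 156800^{1/4}\approx 28.1<30$.
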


\begin{lemma}\label{L: Consecutive distance (SC)}
    Let Algorithm \ref{alg:D-online control} run with step size $\eta>0$ and define $\Mb^t_{1:h} = \frac{1}{m}\sum_{i=1}^n\Mb^{i,t}_{1:h}$. Under Assumptions \ref{A1}, \ref{A2} and \ref{A3}, we have that $\forall i \in [n]$    
    \begin{equation*}
        \norm{\Mb^{t}_{1:h} - \Mb^{i,t}_{1:h}}_F\leq \frac{12\eta G\kappa_B\kappa^5W^2\sqrt{m}h\sqrt{n}}{\gamma^2(1-\beta)}\log^{1/4}\left(\frac{2}{\gamma}\right)
    \end{equation*}
    and
    \begin{equation*}
        \norm{\Mb^t_{1:h} - \widehat{\Mb}^{i,t+1}_{1:h}}_F\leq \frac{12\eta G\kappa_B\kappa^5W^2\sqrt{m}h\sqrt{n}}{\gamma^2(1-\beta)}\log^{1/4}\left(\frac{2}{\gamma}\right).
    \end{equation*}
\end{lemma}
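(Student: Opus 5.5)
We prove Lemma \ref{L: Consecutive distance (SC)} with the standard consensus-error argument for distributed projected online gradient descent. We read $\Mb^t_{1:h}$ as the network average $\frac{1}{n}\sum_{i=1}^n\Mb^{i,t}_{1:h}$. We also use the fact that the algorithm initializes all agents at the same point, so $\Mb^{i,1}_{1:h}=\Mb^1_{1:h}$.

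\emph{Step 1 (perturbed linear recursion).} Write the update as $\Mb^{i,t+1}_{1:h}=\sum_j[\Pb]_{ji}\Mb^{j,t}_{1:h}+\eb^{i,t}$. Here $\eb^{i,t}:=\Mb^{i,t+1}_{1:h}-\sum_j[\Pb]_{ji}\Mb^{j,t}_{1:h}=-\eta\nabla f_{i,t}(\Mb^{i,t}_{1:h}|\Ab,\Bb,\{\wb\})+r_{i,t}$, and $r_{i,t}:=\Pi_{\mathcal{K}}(\widehat{\Mb}^{i,t+1}_{1:h})-\widehat{\Mb}^{i,t+1}_{1:h}$ is the projection residual.

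\emph{Step 2 (bound on $\eb^{i,t}$).} By Lemma \ref{L: Lipschitz constant of the reparameterized costs}, the gradient satisfies $\norm{\nabla f_{i,t}}_F\le L:=\frac{6G\kappa_B\kappa^5W^2\sqrt{m}h}{\gamma^2}\log^{1/4}(\frac{2}{\gamma})$. The set $\mathcal{K}$ is convex, and $\sum_j[\Pb]_{ji}\Mb^{j,t}_{1:h}\in\mathcal{K}$ because it is a convex combination of points of $\mathcal{K}$. Since the projection onto $\mathcal{K}$ is the closest point, $\norm{r_{i,t}}_F\le\norm{\widehat{\Mb}^{i,t+1}_{1:h}-\sum_j[\Pb]_{ji}\Mb^{j,t}_{1:h}}_F=\eta\norm{\nabla f_{i,t}}_F\le\eta L$. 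Hence $\norm{\eb^{i,t}}_F\le 2\eta L$.

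\emph{Step 3 (unroll and use mixing).} Unroll the recursion with the equal initialization:
\begin{equation*}
\Mb^{i,t}_{1:h}=\sum_{j}[\Pb^{t-1}]_{ji}\Mb^{j,1}_{1:h}+\sum_{s=1}^{t-1}\sum_j[\Pb^{t-1-s}]_{ji}\eb^{j,s}.
\end{equation*}
$\Pb$ is doubly stochastic, so averaging over agents gives $\Mb^t_{1:h}=\Mb^1_{1:h}+\sum_{s=1}^{t-1}\frac1n\sum_j\eb^{j,s}$. Subtracting, the initial terms cancel, and we get
\begin{equation*}
\norm{\Mb^t_{1:h}-\Mb^{i,t}_{1:h}}_F\le\sum_{s=1}^{t-1}\sum_j\left|[\Pb^{t-1-s}]_{ji}-\tfrac1n\right|\norm{\eb^{j,s}}_F\le 2\eta L\sqrt{n}\sum_{k\ge0}\beta^k=\frac{2\eta L\sqrt n}{1-\beta},
\end{equation*}
where the second inequality uses the mixing property. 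This is exactly $\frac{12\eta G\kappa_B\kappa^5W^2\sqrt{m}h\sqrt n}{\gamma^2(1-\beta)}\log^{1/4}(\frac{2}{\gamma})$. For the second inequality of the lemma, write $\widehat{\Mb}^{i,t+1}_{1:h}=\sum_j[\Pb]_{ji}\Mb^{j,t}_{1:h}-\eta\nabla f_{i,t}$ and apply the same unrolling, now with the last perturbation being $-\eta\nabla f_{i,t}$ (norm at most $\eta L$) instead of $\eb^{i,t}$. The sum becomes $\sum_j|[\Pb^{t-s}]_{ji}-\frac1n|$ over $s\le t$, with a single extra $\frac1n$ correction term. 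Bounding this gives at most $\eta L+\frac{2\eta L\sqrt n\beta}{1-\beta}\le\frac{2\eta L\sqrt n}{1-\beta}$. If this does not close cleanly, a fallback is the triangle inequality through the average of the mixed iterates, $\norm{\Mb^t-\sum_j[\Pb]_{ji}\Mb^{j,t}}_F+\eta L$, at the cost of a constant factor.

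\emph{Main obstacle.} The key step is controlling the projection residual. The argument hinges on the mixed point $\sum_j[\Pb]_{ji}\Mb^{j,t}_{1:h}$ lying in $\mathcal{K}$, which needs convexity of $\mathcal{K}$. The norm constraints on $\Mb$ are convex, and the constraints $\norm{\xb_t(\Mb)},\norm{\ub_t(\Mb)}\le c$ are convex because $\xb_t(\cdot)$ and $\ub_t(\cdot)$ are affine in $\Mb$. Once that holds, the rest is bookkeeping with the mixing inequality.
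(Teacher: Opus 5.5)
Your proof is correct and is essentially the paper's argument. You unroll the consensus recursion from the identical initialization and bound both the gradient and the projection residual by $\eta L$, the residual because the mixed iterate $\sum_j[\Pb]_{ji}\Mb^{j,t}_{1:h}$ lies in the convex set $\mathcal{K}$. You then sum the mixing bound to get $2\eta L\sqrt{n}/(1-\beta)$. Your second estimate, $\eta L + 2\eta L\sqrt{n}\beta/(1-\beta)\le 2\eta L\sqrt{n}/(1-\beta)$, is exactly the paper's, so your main route already closes and the fallback is unnecessary.

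The only differences are cosmetic. You merge gradient and residual into one perturbation of norm $2\eta L$, where the paper carries two separate geometric sums. You also correctly read the paper's $\frac{1}{m}$ normalization of the network average as $\frac{1}{n}$.
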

\begin{proof}
    In this proof, we consider the vectorized versions of $\Mb^{i,t}_{1:h}$, $\Mb^{t}_{1:h}$ and $\nabla f_{i,t}(\Mb^{i,t}_{1:h}|\Ab,\Bb,\{\wb\})$. For the ease of the analysis, we define the following matrices
    \begin{equation}\label{Eq1: Consecutive distance}
    \begin{split}
        \mathbb{M}_t &= [\Mb^{1,t}_{1:h},\ldots,\Mb^{n,t}_{1:h}]\\
        \widehat{\mathbb{M}}_t &= [\widehat{\Mb}^{1,t}_{1:h},\ldots,\widehat{\Mb}^{n,t}_{1:h}]\\
        \Gb_t &= [\nabla f_{1,t}(\Mb^{1,t}_{1:h}|\Ab,\Bb,\{\wb\}),\ldots,\nabla f_{n,t}(\Mb^{n,t}_{1:h}|\Ab,\Bb,\{\wb\})]\\
        \Rb_t &= [r_{1,t},\ldots,r_{n,t}],
    \end{split}
    \end{equation}
    where $r_{i,t} = \widehat{\Mb}^{i,t}_{1:h} - \Mb^{i,t}_{1:h}$. Based on these notations, the update can be expressed as $\mathbb{M}_t = \mathbb{M}_{t-1}\Pb - \eta \Gb_{t-1} - \Rb_t$.

    Expanding the update recursively, we get
    \begin{equation*}
        \mathbb{M}_t = \mathbb{M}_1\Pb^{t-1} - \sum_{l=1}^{t-1}\eta \Gb_{t-l}\Pb^{l-1} - \sum_{l=0}^{t-2} \Rb_{t-l}\Pb^{l}.
    \end{equation*}
    As $\Pb$ is doubly stochastic, we have $\Pb^k\mathbf{1}=\mathbf{1}$ for all $k\geq 1$. Based on the geometric mixing bound of $\Pb$, we have
    \begin{equation*}
    \begin{split}
        &\norm{\Mb_{t} - \Mb_{i,t}}\\
        = &\norm{\mathbb{M}_t(\frac{1}{n}\mathbf{1}-\eb_i)}\\
        \leq &\norm{\mathbb{M}_{1}\frac{1}{n}\mathbf{1}-\mathbb{M}_{1}[\Pb^{(t-1)}]_{:,i}} + \eta\sum_{l=1}^{t-1}\norm{\Gb_{t-l}(\frac{1}{n}\mathbf{1}-[\Pb^{l-1}]_{:,i})}
        +\sum_{l=0}^{t-2}\norm{\Rb_{t-l}(\frac{1}{n}\mathbf{1}-[\Pb^{l}]_{:,i})}\\
        \leq & \eta \frac{6G\kappa_B\kappa^5W^2\sqrt{m}h}{\gamma^2}\log^{1/4}\left(\frac{2}{\gamma}\right)\sum_{l=1}^{t-1}\sqrt{n}\beta^{l-1} + \eta \frac{6G\kappa_B\kappa^5W^2\sqrt{m}h}{\gamma^2}\log^{1/4}\left(\frac{2}{\gamma}\right) \sum_{l=0}^{t-2}\sqrt{n}\beta^l\\
        \leq &\frac{12\eta G\kappa_B\kappa^5W^2\sqrt{m}h\sqrt{n}}{\gamma^2(1-\beta)}\log^{1/4}\left(\frac{2}{\gamma}\right), 
    \end{split}    
    \end{equation*}
    where the second inequality is due to Lemma \ref{L: Lipschitz constant of the reparameterized costs} and the fact that $\norm{\mathbb{M}_{1}-\mathbb{M}_{1}[\Pb^{(t-1)}]_{:,i}}=0$ by the identical initialization. Also based on Lemma \ref{L: Lipschitz constant of the reparameterized costs} we have
    \begin{equation*}
    \begin{split}
        \norm{r_{i,t}} &= \norm{\widehat{\Mb}^{i,t}_{1:h} - \Mb^{i,t}_{1:h}}\\
        &\leq \norm{ \big(\sum_j [\Pb^{}]_{ji}\Mb^{j,t-1}_{1:h}-\eta\nabla f_{i,t-1}(\Mb^{i,t-1}_{1:h})\big) - \sum_j [\Pb^{}]_{ji}\Mb^{j,t-1}_{1:h}}\\
        &\leq \eta \frac{6G\kappa_B\kappa^5W^2\sqrt{m}h}{\gamma^2}\log^{1/4}\left(\frac{2}{\gamma}\right),
    \end{split}
    \end{equation*}
    where the first inequality is due to projection on convex sets.\\\\
    By the same token, 
    \begin{equation*}
    \begin{split}
        &\norm{\Mb^t_{1:h} - \widehat{\Mb}^{i,t+1}_{1:h}}=\norm{\frac{1}{m}\mathbb{M}_t \mathbf{1}-(\mathbb{M}_t\Pb-\eta \Gb_t)\eb_i}\\
        =&\norm{\mathbb{M}_t(\frac{1}{m}\mathbf{1}-\Pb\eb_i)+\eta \Gb_t\eb_i}\\
        \leq &\norm{\Mb_{1} - \mathbb{M}_{1}[\Pb^{t}]_{:,i}} + \eta\sum_{l=1}^{t-1}\norm{\Gb_{t-l}(\frac{1}{m}\mathbf{1}-[\Pb^{l}]_{:,i})}\\
        +&\sum_{l=0}^{t-2}\norm{R_{t-l}(\frac{1}{m}\mathbf{1}-[\Pb^{l+1}]_{:,i})} + \norm{\eta \nabla f_{i,t}(\Mb_{i,t})}\\
        \leq &\eta \frac{6G\kappa_B\kappa^5W^2\sqrt{m}h}{\gamma^2}\log^{1/4}\left(\frac{2}{\gamma}\right)\\
        + &\eta \frac{6G\kappa_B\kappa^5W^2\sqrt{m}h}{\gamma^2}\log^{1/4}\left(\frac{2}{\gamma}\right)\left[\sum_{l=1}^{t-1}\sqrt{m}\beta^l + \sum_{l=0}^{t-2}\sqrt{m}\beta^{l+1}\right]\\
        \leq &\frac{12\eta G\kappa_B\kappa^5W^2\sqrt{m}h\sqrt{n}}{\gamma^2(1-\beta)}\log^{1/4}\left(\frac{2}{\gamma}\right).
    \end{split}    
    \end{equation*}
        
\end{proof}

\begin{theorem}\label{T: Distributed regret bound}
    Suppose Assumptions \ref{A: Bounded B and noises}, \ref{A: Gradient bound} and \ref{A: zero is diagonally and strongly stable} hold. Then by running Algorithm, we have for any $\Mb_{1:h}\in\mathcal{K}$,
    \begin{equation*}
    \begin{split}
        &\sum_{t=1}^T\sum_{i=1}^n \left[f_{i,t}(\Mb_{1:h}^{j,t}|\Ab,\Bb,\{\wb\}) - f_{i,t}(\Mb_{1:h}|\Ab,\Bb,\{\wb\})\right]\\
        \leq &\frac{n}{2\eta}\norm{\Mb_{1:h}^1 - \Mb_{1:h}}^2 + T\left(\frac{8\eta n\sqrt{n}}{(1-\beta)} + 2\eta\right)\left(\frac{6G\kappa_B\kappa^5W^2\sqrt{m}h}{\gamma^2}\log^{1/4}\left(\frac{2}{\gamma}\right)\right)^2.
    \end{split}
    \end{equation*}
\end{theorem}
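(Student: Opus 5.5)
This is the standard distributed online gradient descent argument. Let $L:=\frac{6G\kappa_B\kappa^5W^2\sqrt{m}h}{\gamma^2}\log^{1/4}\left(\frac{2}{\gamma}\right)$ be the Lipschitz constant from Lemma \ref{L: Lipschitz constant of the reparameterized costs}. This constant also bounds the norm of every (sub)gradient $\nabla f_{i,t}(\cdot|\Ab,\Bb,\{\wb\})$ on $\mathcal{K}$. Write $F_t(\cdot):=\sum_{i=1}^n f_{i,t}(\cdot|\Ab,\Bb,\{\wb\})$, and track the network average $\Mb^t_{1:h}=\frac{1}{n}\sum_i\Mb^{i,t}_{1:h}$ as in Lemma \ref{L: Consecutive distance (SC)}.

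\textbf{Step 1: from agent $j$'s iterate to the average.} I first move from the evaluation point $\Mb^{j,t}_{1:h}$ to the average $\Mb^t_{1:h}$, and then from the average to each agent's own iterate. By the Lipschitz bound,
\[
F_t(\Mb^{j,t}_{1:h})-F_t(\Mb_{1:h})\le \sum_{i=1}^n\big[f_{i,t}(\Mb^{i,t}_{1:h})-f_{i,t}(\Mb_{1:h})\big]+L\sum_{i=1}^n\big(\norm{\Mb^{j,t}_{1:h}-\Mb^t_{1:h}}+\norm{\Mb^{t}_{1:h}-\Mb^{i,t}_{1:h}}\big).
\]
Lemma \ref{L: Consecutive distance (SC)} bounds each distance by $\frac{2\eta L\sqrt{n}}{1-\beta}$. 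Summed over $i$, the network-error terms therefore contribute $\frac{4\eta n\sqrt{n}L^2}{1-\beta}$ per round.

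\textbf{Step 2: one-step descent inequality at the average.} For the local terms, convexity gives
\[
f_{i,t}(\Mb^{i,t}_{1:h})-f_{i,t}(\Mb_{1:h})\le\langle\nabla f_{i,t}(\Mb^{i,t}_{1:h}),\Mb^{i,t}_{1:h}-\Mb_{1:h}\rangle.
\]
Here convexity of $f_{i,t}(\cdot|\Ab,\Bb,\{\wb\})$ follows because $\xb_t(\Mb),\ub_t(\Mb)$ are linear in $\Mb$. I then expand the squared distance of the average to the comparator. Since $\Pb$ is doubly stochastic, $\frac1n\sum_i\widehat{\Mb}^{i,t+1}_{1:h}=\Mb^t_{1:h}-\frac{\eta}{n}\sum_i\nabla f_{i,t}(\Mb^{i,t}_{1:h})$. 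The cleanest route avoids analyzing the average of projections directly. Instead, I use nonexpansiveness agent by agent: $\norm{\Mb^{i,t+1}_{1:h}-\Mb_{1:h}}\le\norm{\widehat{\Mb}^{i,t+1}_{1:h}-\Mb_{1:h}}$. Summing over $i$ and using convexity of $\norm{\cdot}^2$ together with double stochasticity gives
\[
\sum_i\norm{\Mb^{i,t+1}_{1:h}-\Mb_{1:h}}^2\le\sum_i\norm{\Mb^{i,t}_{1:h}-\Mb_{1:h}}^2-2\eta\sum_i\langle\nabla f_{i,t}(\Mb^{i,t}_{1:h}),\textstyle\sum_k[\Pb]_{ki}\Mb^{k,t}_{1:h}-\Mb_{1:h}\rangle+n\eta^2L^2 .
\]
The mismatch between $\sum_k[\Pb]_{ki}\Mb^{k,t}_{1:h}$ and $\Mb^{i,t}_{1:h}$ must still be controlled. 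Since $\sum_k[\Pb]_{ki}\Mb^{k,t}_{1:h}=\widehat{\Mb}^{i,t+1}_{1:h}+\eta\nabla f_{i,t}(\Mb^{i,t}_{1:h})$, I insert $\pm\Mb^t_{1:h}$ and apply both bounds of Lemma \ref{L: Consecutive distance (SC)}. This costs at most $2\eta L\cdot\frac{4\eta L\sqrt{n}}{1-\beta}$ per agent, plus an $\eta^2L^2$ term.

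\textbf{Step 3: telescoping.} I divide by $2\eta$ and sum over $t$. The distance terms telescope to $\frac{1}{2\eta}\sum_i\norm{\Mb^{i,1}_{1:h}-\Mb_{1:h}}^2=\frac{n}{2\eta}\norm{\Mb^1_{1:h}-\Mb_{1:h}}^2$, using the identical initialization. The remaining per-round terms are of the form $\eta L^2\big(c_1\frac{n\sqrt{n}}{1-\beta}+c_2\big)$. Collecting them with Step 1 should give the constants $8\eta n\sqrt n/(1-\beta)+2\eta$ in front of $L^2$ after loose bounding, for example $n\le n\sqrt n/(1-\beta)$.

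\textbf{Main obstacle.} I expect the hardest part to be Step 2: handling the projection and the consensus mixing in the same descent inequality without losing more than an $O(\eta\sqrt n/(1-\beta))$ factor. This is exactly where Lemma \ref{L: Consecutive distance (SC)} has to be used, and the constant bookkeeping must be done carefully.
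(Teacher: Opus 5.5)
Your argument is correct, but it rests on a different potential function than the paper's.

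\textbf{The paper's route.} The paper tracks only the network average. It writes $\Mb^{t+1}_{1:h}=\Mb^t_{1:h}-\frac{\eta}{n}\sum_i\nabla f_{i,t}(\Mb^{i,t}_{1:h}|\Ab,\Bb,\{\wb\})-\frac1n\sum_i r^i_{t+1}$ and expands $\norm{\Mb^{t+1}_{1:h}-\Mb_{1:h}}^2$. The projection residuals must then be handled explicitly:
\begin{itemize}
\item the cross term $\langle \Mb^t_{1:h}-\Mb_{1:h},-r^i_{t+1}\rangle$ is split at $\widehat{\Mb}^{i,t+1}_{1:h}$;
\item the piece $\langle \widehat{\Mb}^{i,t+1}_{1:h}-\Mb_{1:h},-r^i_{t+1}\rangle\le 0$ is dropped using the projection variational inequality;
\item the remainder is bounded with the second estimate of Lemma~\ref{L: Consecutive distance (SC)} and $\norm{r^i_{t+1}}\le\eta L$;
\item the quadratic term is bounded as $\frac{1}{n^2}\norm{\sum_i(r^i_{t+1}+\eta\nabla f_{i,t})}^2\le(2\eta L)^2$.
\end{itemize}

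\textbf{Your route.} You use the sum $\sum_i\norm{\Mb^{i,t}_{1:h}-\Mb_{1:h}}^2$, the classical distributed-subgradient potential. Per-agent nonexpansiveness of $\Pi_{\mathcal{K}}$, together with Jensen's inequality and double stochasticity, absorbs the projection and the mixing in one step. No residuals or variational inequality appear. The consensus error enters only through $\langle\nabla f_{i,t},\sum_k[\Pb]_{ki}\Mb^{k,t}_{1:h}-\Mb^{i,t}_{1:h}\rangle$.

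\textbf{What the two share.} Your Step 1 is exactly the paper's Lipschitz transfer from $\Mb^{j,t}_{1:h}$ to $\Mb^{i,t}_{1:h}$ via the average. Both routes obtain $\frac{n}{2\eta}\norm{\Mb^1_{1:h}-\Mb_{1:h}}^2$ from identical initialization. Both also incur the same $\frac{8\eta n\sqrt n}{1-\beta}L^2$ consensus cost. The paper's version reads as perturbed centralized OGD on a single sequence; yours is more elementary and slightly tighter.

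\textbf{On the constants.} Carried out, your bookkeeping gives $\left(\frac{8\eta n\sqrt n}{1-\beta}+\frac32 n\eta\right)L^2$ per round. The mismatch term costs $2\eta^2L^2$ per agent, not $\eta^2L^2$. If you instead bound $\norm{\sum_k[\Pb]_{ki}\Mb^{k,t}_{1:h}-\Mb^t_{1:h}}\le\max_k\norm{\Mb^{k,t}_{1:h}-\Mb^t_{1:h}}$, that extra term disappears and $\frac12 n\eta$ remains.

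Either way you do not get the literal $2\eta$. Your suggested absorption $n\le n\sqrt n/(1-\beta)$ would enlarge the $8$ rather than produce $2\eta$, so that sentence does not deliver the stated constants.

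The paper's own proof does not reach $2\eta$ either. Its term $\frac{n}{2\eta}(2\eta L)^2$ equals $2n\eta L^2$ per round, so the statement appears to have dropped a factor of $n$. Your bound is at least as strong as what the paper actually proves, and the difference does not affect the order of the main regret bound.
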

\begin{proof}
    Following the notation defined in Equation \eqref{Eq1: Consecutive distance}, we have the following equations for the iterate $\Mb^t_{1:h}$:
    \begin{equation}\label{Eq1: T Distributed regret bound}
    \begin{split}
        \Mb_{1:h}^{t+1}&=\frac{1}{n}\mathbb{M}_{t+1}\mathbf{1}=\frac{1}{n}(\mathbb{M}_t\Pb-\eta\Gb_t-\Rb_{t+1})\mathbf{1}\\
        &=\frac{1}{n}\mathbb{M}_t\mathbf{1} - \frac{\eta}{n}\Gb_t\mathbf{1} - \frac{1}{n}\Rb_{t+1}\mathbf{1}=\Mb_{1:h}^{t}-\frac{\eta}{n}\sum_{i=1}^n\nabla f_{i,t}(\Mb_{1:h}^{i,t}|\Ab,\Bb,\{\wb\})-\frac{1}{n}\sum_{i=1}^n r^i_{t+1}.
    \end{split}
    \end{equation}
    Then for any $\Mb_{1:h}\in\mathcal{K}$, we have
    \begin{equation}\label{Eq2: T Distributed regret bound}
    \begin{split}
        \norm{\Mb_{1:h}^{t+1} - \Mb_{1:h}}^2 &= \norm{\Mb_{1:h}^{t} - \Mb_{1:h}}^2 + \frac{1}{n^2}\norm{\sum_{i=1}^n\left(-r_{t+1}^i - \eta\nabla f_{i,t}(\Mb_{1:h}^{i,t}|\Ab,\Bb,\{\wb\})\right)}^2\\
        &-\frac{2\eta}{n}\sum_{i=1}^n\langle \Mb_{1:h}^{t} - \Mb_{1:h}, \nabla f_{i,t}(\Mb_{1:h}^{i,t}|\Ab,\Bb,\{\wb\})\rangle - \frac{2}{n}\sum_{i=1}^n\langle \Mb_{1:h}^{t} - \Mb_{1:h}, r_{t+1}^i\rangle.
    \end{split}
    \end{equation}
    Based on Lemma \ref{L: Lipschitz constant of the reparameterized costs}, we have
    \begin{equation}\label{Eq3: T Distributed regret bound}
    \begin{split}
        \frac{1}{n^2}\norm{\sum_{i=1}^n\left(-r_{t+1}^i - \eta\nabla f_{i,t}(\Mb_{1:h}^{i,t}|\Ab,\Bb,\{\wb\})\right)}^2
        &\leq \frac{1}{n^2}\left[\sum_{i=1}^n\left(\norm{r_{t+1}^i} + \eta\norm{\nabla f_{i,t}(\Mb_{1:h}^{i,t}|\Ab,\Bb,\{\wb\})}\right)\right]^2\\
        &\leq \left( \eta \frac{12G\kappa_B\kappa^5W^2\sqrt{m}h}{\gamma^2}\log^{1/4}\left(\frac{2}{\gamma}\right) \right)^2.
    \end{split}    
    \end{equation}
    As $f_{i,t}(\Mb_{1:h}|\Ab,\Bb,\{\wb\})$ is convex in terms of $\Mb_{1:h}$, we have 
    \begin{equation}\label{Eq4: T Distributed regret bound}
    \begin{split}
        &\langle \Mb_{1:h}-\Mb^t_{1:h}, \nabla f_{i,t}(\Mb_{1:h}^{i,t}|\Ab,\Bb,\{\wb\}) \rangle \\
        = &\langle \Mb_{1:h} - \Mb^{i,t}_{1:h} + \Mb^{i,t}_{1:h} - \Mb^t_{1:h}, \nabla f_{i,t}(\Mb_{1:h}^{i,t}|\Ab,\Bb,\{\wb\})\rangle\\
        \leq &f_{i,t}(\Mb_{1:h}|\Ab,\Bb,\{\wb\}) - f_{i,t}(\Mb_{1:h}^{i,t}|\Ab,\Bb,\{\wb\}) + \norm{\Mb^{i,t}_{1:h} - \Mb^t_{1:h}}\norm{\nabla f_{i,t}(\Mb_{1:h}^{i,t}|\Ab,\Bb,\{\wb\})}\\
        \leq &f_{i,t}(\Mb_{1:h}|\Ab,\Bb,\{\wb\}) - f_{i,t}(\Mb_{1:h}^{i,t}|\Ab,\Bb,\{\wb\}) + \norm{\Mb^{i,t}_{1:h} - \Mb^t_{1:h}}\frac{6G\kappa_B\kappa^5W^2\sqrt{m}h}{\gamma^2}\log^{1/4}\left(\frac{2}{\gamma}\right)
    \end{split}        
    \end{equation}
    For the term $\langle \Mb_{1:h}^t-\Mb_{1:,h}, -r_{t+1}^i\rangle$, its upper bound is shown as follows
    \begin{equation}\label{Eq5: T Distributed regret bound}
    \begin{split}
        \langle \Mb_{1:h}^t-\Mb_{1:,h}, -r_{t+1}^i\rangle
        &= \langle \Mb_{1:h}^t-\widehat{\Mb}^{i,t+1}_{1:,h}, -r_{t+1}^i\rangle + \langle \widehat{\Mb}^{i,t+1}_{1:,h}-\Mb_{1:,h}, -r_{t+1}^i\rangle\\
        &\leq \norm{\Mb_{1:h}^t-\widehat{\Mb}^{i,t+1}_{1:,h}}\norm{r_{t+1}^i}\\
        &\leq \norm{\Mb_{1:h}^t-\widehat{\Mb}^{i,t+1}_{1:,h}}\eta \frac{6G\kappa_B\kappa^5W^2\sqrt{m}h}{\gamma^2}\log^{1/4}\left(\frac{2}{\gamma}\right),     
    \end{split}    
    \end{equation}
    where the second inequality is due to the fact that $\langle \widehat{\Mb}^{i,t+1}_{1:,h}-\Mb_{1:,h}, -r_{t+1}^i\rangle$ is non-positive because of the property of a projection operator on a convex set, and the last inequality is based on Equation. 

    Substituting Equations \eqref{Eq3: T Distributed regret bound}, \eqref{Eq4: T Distributed regret bound} and \eqref{Eq5: T Distributed regret bound} into Equation \eqref{Eq2: T Distributed regret bound}, we can get
    \begin{equation}\label{Eq6: T Distributed regret bound}
    \begin{split}
        &\norm{\Mb_{1:h}^{t+1} - \Mb_{1:h}}^2 - \norm{\Mb_{1:h}^{t} - \Mb_{1:h}}^2\\
        \leq &\left( \eta \frac{12G\kappa_B\kappa^5W^2\sqrt{m}h}{\gamma^2}\log^{1/4}\left(\frac{2}{\gamma}\right) \right)^2 + \frac{2}{n}\sum_{i=1}^n \left[\norm{\Mb_{1:h}^t-\widehat{\Mb}^{i,t+1}_{1:,h}}\eta \frac{6G\kappa_B\kappa^5W^2\sqrt{m}h}{\gamma^2}\log^{1/4}\left(\frac{2}{\gamma}\right)\right]\\
        + &\frac{2\eta}{n}\sum_{i=1}^n\left[f_{i,t}(\Mb_{1:h}|\Ab,\Bb,\{\wb\}) - f_{i,t}(\Mb_{1:h}^{i,t}|\Ab,\Bb,\{\wb\}) + \norm{\Mb^{i,t}_{1:h} - \Mb^t_{1:h}}\frac{6G\kappa_B\kappa^5W^2\sqrt{m}h}{\gamma^2}\log^{1/4}\left(\frac{2}{\gamma}\right)\right]
        \end{split}
    \end{equation}

    Due to the convexity of $f_{i,t}(\cdot)$ and Lemma \ref{L: Lipschitz constant of the reparameterized costs}, we have the following inequality: 
    \begin{equation}\label{Eq7: T Distributed regret bound}
    \begin{split}
        &f_{i,t}(\Mb_{1:h}^{j,t}|\Ab,\Bb,\{\wb\}) - f_{i,t}(\Mb_{1:h}^{i,t}|\Ab,\Bb,\{\wb\})\\
        \leq &\norm{\nabla f_{i,t}(\Mb_{1:h}^{j,t}|\Ab,\Bb,\{\wb\})}\norm{\Mb_{1:h}^{j,t} - \Mb_{1:h}^{i,t}}\\
        \leq &\frac{6G\kappa_B\kappa^5W^2\sqrt{m}h}{\gamma^2}\log^{1/4}\left(\frac{2}{\gamma}\right)\left(\norm{\Mb_{1:h}^{j,t} - \Mb_{1:h}^{t}} + \norm{\Mb_{1:h}^{t} - \Mb_{1:h}^{i,t}}\right)
    \end{split}            
    \end{equation}

    By adding and subtracting $f_{i,t}(\Mb_{1:h}^{j,t}|\Ab,\Bb,\{\wb\})$ on the left hand side of Equation \eqref{Eq6: T Distributed regret bound} and rearranging it, we have the following inequality based on Equation \eqref{Eq7: T Distributed regret bound}: 
    \begin{equation}\label{Eq8: T Distributed regret bound}
    \begin{split}
        &\sum_{i=1}^n \left[f_{i,t}(\Mb_{1:h}^{j,t}|\Ab,\Bb,\{\wb\}) - f_{i,t}(\Mb_{1:h}|\Ab,\Bb,\{\wb\})\right]\\
        \leq &\frac{n}{2\eta}\left(\norm{\Mb_{1:h}^{t} - \Mb_{1:h}}^2 - \norm{\Mb_{1:h}^{t+1} - \Mb_{1:h}}^2\right) + \sum_{i=1}^n \left[\norm{\Mb_{1:h}^t-\widehat{\Mb}^{i,t+1}_{1:,h}} \frac{6G\kappa_B\kappa^5W^2\sqrt{m}h}{\gamma^2}\log^{1/4}\left(\frac{2}{\gamma}\right)\right]\\
        + &\sum_{i=1}^n\left[\frac{6G\kappa_B\kappa^5W^2\sqrt{m}h}{\gamma^2}\log^{1/4}\left(\frac{2}{\gamma}\right)\left(\norm{\Mb_{1:h}^{j,t} - \Mb_{1:h}^{t}} + 2\norm{\Mb_{1:h}^{t} - \Mb_{1:h}^{i,t}}\right)\right]\\
        + &\frac{n}{2\eta}\left( \eta \frac{12G\kappa_B\kappa^5W^2\sqrt{m}h}{\gamma^2}\log^{1/4}\left(\frac{2}{\gamma}\right) \right)^2\\
        \leq & \frac{n}{2\eta}\left(\norm{\Mb_{1:h}^{t} - \Mb_{1:h}}^2 - \norm{\Mb_{1:h}^{t+1} - \Mb_{1:h}}^2\right) + \frac{n}{2\eta}\left( \eta \frac{12G\kappa_B\kappa^5W^2\sqrt{m}h}{\gamma^2}\log^{1/4}\left(\frac{2}{\gamma}\right) \right)^2\\
        + &\frac{8\eta n\sqrt{n}}{(1-\beta)}\left(\frac{6G\kappa_B\kappa^5W^2\sqrt{m}h}{\gamma^2}\log^{1/4}\left(\frac{2}{\gamma}\right)\right)^2,
    \end{split} 
    \end{equation}
    where the second inequality is based on Lemma \ref{L: Consecutive distance (SC)}. Then by summing Equation \eqref{Eq8: T Distributed regret bound} over $t=1,\ldots,T$, the theorem is proved.

\end{proof}

\begin{theorem}\label{T: Main Theorem}
    Suppose Assumptions \ref{A: Bounded B and noises}, \ref{A: Gradient bound} and \ref{A: zero is diagonally and strongly stable} hold. Then by running Algorithm, we have the individual regret bound of the following order
    \begin{equation*}
        \textbf{Regret} = \Tilde{O}(\frac{n\sqrt{nT}}{(1-\beta)\gamma^3}),
    \end{equation*}
    with the hyperparameters determined as follows:
    \begin{itemize}
        \item $\epsilon = \frac{1}{\sqrt{T}}$

        \item $m = \lceil \frac{1}{\gamma}\log\left(\frac{8G\kappa_B\kappa^8W^2\sqrt{T}}{\gamma^3}\right)\rceil$

        \item $h=2\log T\log\left(\frac{600G\kappa_B\kappa^8W^2\sqrt{mdT}}{\kappa^{5/2}}\log T\log^{1/4}\left(\frac{2}{\gamma}\right)\right)$

        \item $\eta = \frac{\gamma^2}{mh^2\sqrt{T}}$
    \end{itemize}
\end{theorem}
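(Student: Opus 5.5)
We bound the individual regret of agent $j$ through a telescoping decomposition that compares four trajectories: the one actually generated by agent $j$, the memory-less surrogate evaluated at $\Mb^{j,t}_{1:h}$, the surrogate evaluated at a fixed comparator $\Mb^\star_{1:h}\in\mathcal{K}$, and the best linear policy $\Kb^\star\in\Sc$. Fix $\Kb^\star$ attaining (or nearly attaining) $\min_{\Kb\in\Sc}J_T$. We write
\begin{equation*}
\begin{split}
J_T^j(\mathcal{A}_1)-J_T(\Kb^\star) = &\sum_{t=1}^T\sum_{i=1}^n\Big[f_{i,t}(\xb^{j,\mathcal{A}_1}_t,\ub^{j,\mathcal{A}_1}_t)-f_{i,t}(\Mb^{j,t}_{1:h}|\Ab,\Bb,\{\wb\})\Big]\\
+&\sum_{t=1}^T\sum_{i=1}^n\Big[f_{i,t}(\Mb^{j,t}_{1:h}|\Ab,\Bb,\{\wb\})-f_{i,t}(\Mb^\star_{1:h}|\Ab,\Bb,\{\wb\})\Big]\\
+&\sum_{t=1}^T\sum_{i=1}^n\Big[f_{i,t}(\xb^{\Mb^\star}_t,\ub^{\Mb^\star}_t)-f_{i,t}(\xb^{\Kb^\star}_t,\ub^{\Kb^\star}_t)\Big].
\end{split}
\end{equation*}
Here $\Mb^\star$ is the spectral controller supplied by Lemma \ref{L: Deviation between costs of SC and truncated linear policies} for the truncated policy $(\Kb^\star,m)$.

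The first sum is handled by Lemma \ref{L: Deviation between actual costs and reparameterize costs}. Each summand is at most $\frac{144G^2\kappa_B^2\kappa^{10}W^4\sqrt{n}}{\gamma^3\sqrt{T}(1-\beta)}\log^{1/2}(2/\gamma)$, so summing over $i$ and $t$ gives $O\big(\frac{n\sqrt{n}\sqrt{T}}{(1-\beta)\gamma^3}\big)$. For the lemma to apply, $T$ must exceed the stated threshold; this is a constant depending on $\gamma,n,\beta$, and for smaller $T$ the regret is trivially $O(1)$ in $T$.

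The second sum is the distributed online learning regret, which Theorem \ref{T: Distributed regret bound} controls with comparator $\Mb^\star_{1:h}\in\mathcal{K}$. Since $\mathcal{K}$ constrains $\norm{\Mb_{1:h}}\le\kappa^3\sqrt{2h/\gamma}$, the diameter term satisfies $\norm{\Mb^1_{1:h}-\Mb^\star_{1:h}}^2\le 8\kappa^6h/\gamma$. Substituting $\eta=\frac{\gamma^2}{mh^2\sqrt{T}}$, the term $\frac{n}{2\eta}\cdot\frac{8\kappa^6h}{\gamma}$ becomes $O\big(\frac{nmh\sqrt{T}}{\gamma^3}\big)$. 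With $m=\tilde O(1/\gamma)$ this looks like $\gamma^{-4}$, so we need care: because $m$ carries a $1/\gamma$ factor, the $\gamma$-dependence must be tracked exactly as in the centralized analysis of \cite{brahmbhatt2025new}. That reference uses the same $\eta$ and reports $\gamma^{-3}$ up to polylogarithmic factors; we would follow its accounting, absorbing $\log T$ and $\log(1/\gamma)$ into $\tilde O$. The variance term is $T\eta\big(\frac{8n\sqrt{n}}{1-\beta}+2\big)\cdot\frac{36G^2\kappa_B^2\kappa^{10}W^4mh^2}{\gamma^4}\log^{1/2}(2/\gamma)$, which equals $O\big(\frac{n\sqrt{n}\sqrt{T}}{(1-\beta)\gamma^2}\big)$ after the $mh^2$ factors cancel against $\eta$.

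The third sum is the approximation error. We split it through the truncated policy $(\Kb^\star,m)$. Lemma \ref{L: Deviation between costs of normal and truncated linear policies} applies with $\epsilon=1/\sqrt{T}$, because the chosen $m$ meets its requirement exactly. It also guarantees the state and control bounds that Lemma \ref{L: Deviation between costs of SC and truncated linear policies} needs, and the chosen $h$ meets that lemma's requirement. Each piece then contributes $\frac{\epsilon}{2}T$ per agent, giving $n\sqrt{T}$ in total. We must also confirm that $\Mb^\star\in\mathcal{K}$, which Lemma \ref{L: Deviation between costs of SC and truncated linear policies} asserts.

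Summing the three bounds gives $\tilde O\big(\frac{n\sqrt{nT}}{(1-\beta)\gamma^3}\big)$. The main obstacle is the $\gamma$ and $m$ bookkeeping in the diameter term $\frac{n}{2\eta}\norm{\Mb^1-\Mb^\star}^2$. Our first attempt would use the finer structure of $\Mb^\star$ from the approximation lemma, where the weights $\sigma_i^{1/4}$ shrink the effective norm of the comparator, to bound $\norm{\Mb^\star}^2$ more tightly than the crude radius of $\mathcal{K}$. Failing that, we would accept an extra polylogarithmic or $1/\gamma$ factor, matching the paper's reliance on $\tilde O$.
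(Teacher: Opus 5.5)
Your decomposition is the paper's own. You use the same four pieces: Lemma~\ref{L: Deviation between actual costs and reparameterize costs} for the memory mismatch, Theorem~\ref{T: Distributed regret bound} for the distributed OGD term, and Lemmas~\ref{L: Deviation between costs of normal and truncated linear policies}--\ref{L: Deviation between costs of SC and truncated linear policies} at $\epsilon=1/\sqrt{T}$ for the approximation. Your bounds on the mismatch, variance and approximation terms agree with it.

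The gap is the step you single out yourself, and the proposal leaves it open. With $\norm{\Mb^1_{1:h}-\Mb^\star_{1:h}}^2\le 8\kappa^6h/\gamma$ and $1/\eta=mh^2\sqrt{T}/\gamma^2$ (you dropped the $h^2$), the diameter term is
\begin{equation*}
\frac{n}{2\eta}\cdot\frac{8\kappa^6 h}{\gamma}=\frac{4n\kappa^6 m h^3\sqrt{T}}{\gamma^3}.
\end{equation*}
This is $\tilde{O}(n\sqrt{T}/\gamma^4)$, because $m=\lceil\frac{1}{\gamma}\log(\cdot)\rceil$ carries a genuine factor $1/\gamma$. Neither of your remedies works.

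\begin{itemize}
\item Appealing to the centralized reference's accounting is not an argument.
\item The $\sigma_i^{1/4}$ weights go the wrong way. Matching the truncated policy filter by filter gives $\sigma_i^{1/4}\Mb^\star_i=\Kb^\star\Hb\big(\sum_l(\phi_i^\top\mu_{\alpha_l})\eb_l\eb_l^\top\big)\Hb^{-1}$, with $\Hb$ the diagonalizing matrix. So the weight is divided out, and Lemma~\ref{L: Bound on the inner product between a geometric vector and the eigenvector of Hankel matrix} yields exactly the per-block radius $\kappa^3\sqrt{2/\gamma}$ of $\mathcal{K}$.
\item That radius is essentially attained. The Hankel matrix equals $\int_0^{1-\gamma}\mu_\alpha\mu_\alpha^\top\,d\alpha$, so it is dominated by the $m\times m$ Hilbert matrix and all $\sigma_i<\pi$. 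Meanwhile $\int_0^{1-\gamma}(\mu_{1-\gamma}^\top\mu_\alpha)^2d\alpha\approx 1/(2\gamma)$. Hence $\sum_{i\le h}(\phi_i^\top\mu_{1-\gamma})^2\sigma_i^{-1/2}\ge\pi^{-3/2}\sum_{i\le h}\sigma_i(\phi_i^\top\mu_{1-\gamma})^2=\Omega(1/\gamma)$, the tail beyond $h$ being negligible. So $\norm{\Mb^\star_{1:h}}^2=\Omega(1/\gamma)$ whenever $\Kb^\star$ acts nontrivially on a closed-loop mode at $1-\gamma$.
\item Your fallback of absorbing the extra $1/\gamma$ proves only $\tilde{O}\big(\frac{n\sqrt{nT}}{(1-\beta)\gamma^3}+\frac{n\sqrt{T}}{\gamma^4}\big)$, not the statement.
\end{itemize}

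The paper's proof does not close this either. It uses the same bound $\norm{\Mb^\star_{1:h}}\le\kappa^3\sqrt{2h/\gamma}$ and the same $\eta$. Its final step, ``replacing $m,h,\eta$ with our selections'' in the last display, produces exactly your $\tilde{O}(n\sqrt{T}/\gamma^4)$ term.

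Retuning $\eta$ does not rescue $\gamma^{-3}$ from these ingredients. Before substituting $\eta$, the mismatch term from Lemma~\ref{L: Deviation between actual costs and reparameterize costs} totals order $\frac{n\sqrt{n}\,\eta m h^2 T}{(1-\beta)\gamma^5}$. Balancing it against $\frac{nh}{\gamma\eta}$ gives at best $\tilde{O}\big(\frac{n^{5/4}\sqrt{T}}{\sqrt{1-\beta}\,\gamma^{7/2}}\big)$. So the obstacle you identified is a genuine gap, shared with the paper's argument, and closing it needs a sharper ingredient rather than bookkeeping.

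A smaller point: the stated $h$ has $\kappa^{5/2}$ where Lemma~\ref{L: Deviation between costs of SC and truncated linear policies} needs $\gamma^{5/2}$. Your claim that the chosen $h$ meets that hypothesis therefore holds only after correcting this apparent typo, which affects logarithmic factors only.
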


\begin{proof}{Proof of Main Theorem}
    Consider $\Kb^*$ as the optimal linear policy in hindsight. Then, the individual regret is decomposed as follows:
    \begin{equation}\label{Eq1: T Main Theorem}
    \begin{split}
        &J_T^j(\mathcal{A}_1) - \min_{\Kb}J_T(\Kb)\\
        =&\sum_{t=1}^T\sum_{i=1}^n\left[f_{i,t}(\xb_t^{j,\mathcal{A}_1},\ub_t^{j,\mathcal{A}_1}) - f_{i,t}(\xb_t^{\Kb^*},\ub_t^{\Kb^*})\right]\\
        =&\sum_{t=1}^T\sum_{i=1}^n\left[f_{i,t}(\xb_t^{j,\mathcal{A}_1},\ub_t^{j,\mathcal{A}_1}) - f_{i,t}(\Mb_{1:h}^{j,t}|\Ab,\Bb,\{\wb\})\right]\\
        +&\sum_{t=1}^T\sum_{i=1}^n\left[f_{i,t}(\Mb_{1:h}^{j,t}|\Ab,\Bb,\{\wb\})-f_{i,t}(\Mb_{1:h}^{*}|\Ab,\Bb,\{\wb\})\right]\\
        +&\sum_{t=1}^T\sum_{i=1}^n\left[f_{i,t}(\Mb_{1:h}^{*}|\Ab,\Bb,\{\wb\}) - f_{i,t}(\xb_t^{\Kb^*,m}, \ub_t^{\Kb^*,m})\right]\\
        +&\sum_{t=1}^T\sum_{i=1}^n\left[f_{i,t}(\xb_t^{\Kb^*,m}, \ub_t^{\Kb^*,m}) - f_{i,t}(\xb_t^{\Kb^*}, \ub_t^{\Kb^*})\right],
    \end{split}    
    \end{equation}
    where each term can be bounded as follows:
    \begin{enumerate}
        \item Based on Lemma \ref{L: Deviation between actual costs and reparameterize costs}, we have
        \begin{equation}\label{Eq2: T Main Theorem}
        \begin{split}
            |f_{i,t}(\xb_t^{j,\mathcal{A}_1}, \ub_t^{j,\mathcal{A}_1}) - f_{i,t}(\Mb_{1:h}^{j,t}|\Ab,\Bb,\{\wb\})| \leq \frac{144 G^2\kappa_B^2\kappa^{10}W^4\sqrt{n}}{\gamma^3\sqrt{T}(1-\beta)}\log^{1/2}\left(\frac{2}{\gamma}\right).
        \end{split}    
        \end{equation}
        
        \item As $\Kb^*\in\Sc$, we have $\Ab + \Bb\Kb^* = \Hb\Lb\Hb^{-1}$, where $\Lb$ is diagonal and can be written as $\Lb = \sum_{l=1}^d \alpha_l \eb_l\eb_l^{\top}$, $\alpha_l\leq \frac{1}{3}, l\in[d]$. Then by setting $\Mb^*_i=\sigma_i^{1/4}\Kb^*\Hb\left(\sum_{i=1}^d \phi_i^{\top}\mu_{\alpha_j}\eb_j\eb_j^{\top}\right)$. Then by Lemma \ref{L: Bound on the inner product between a geometric vector and the eigenvector of Hankel matrix}, we have $\norm{\Mb^*_i}\leq \kappa^3\sqrt{\frac{2}{\gamma}}$, which implies $\norm{\Mb^*_{1:h}}\leq \kappa^3\sqrt{\frac{2h}{\gamma}}$. In addition, based on the proof of Lemma 5.2 in Hazan, it can be shown that $\Mb^*_{1:h}\in \mathcal{K}$. Now from Theorem \ref{T: Distributed regret bound}, we have 
        \begin{equation}\label{Eq3: T Main Theorem}
        \begin{split}
            &\sum_{t=1}^T\sum_{i=1}^n \left[f_{i,t}(\Mb_{1:h}^{j,t}|\Ab,\Bb,\{\wb\}) - f_{i,t}(\Mb^*_{1:h}|\Ab,\Bb,\{\wb\})\right]\\
            \leq &\frac{n}{2\eta}\norm{\Mb_{1:h}^1 - \Mb^*_{1:h}}^2 + T\left(\frac{8\eta n\sqrt{n}}{(1-\beta)} + 2\eta\right)\left(\frac{6G\kappa_B\kappa^5W^2\sqrt{m}h}{\gamma^2}\log^{1/4}\left(\frac{2}{\gamma}\right)\right)^2
        \end{split}
        \end{equation}


        \item Based on our choice of $h$ and Lemma \ref{L: Deviation between costs of SC and truncated linear policies}, we have $\forall i$
        \begin{equation}\label{Eq4: T Main Theorem}
        \begin{split}
            \sum_{t=1}^T\left[f_{i,t}(\Mb^*_{1:h}|\Ab,\Bb,\{\wb\}) - f_{i,t}(\xb_t^{\Kb^*,m}, \ub_t^{\Kb^*,m})\right]\leq \frac{\epsilon T}{2}.
        \end{split}
        \end{equation}

        \item With the selection of $m$ and Lemma \ref{L: Deviation between costs of normal and truncated linear policies}, we have $\forall i$
        \begin{equation}\label{Eq5: T Main Theorem}
        \begin{split}
            \sum_{t=1}^T\left[f_{i,t}(\xb_t^{\Kb^*,m}, \ub_t^{\Kb^*,m}) - f_{i,t}(\xb_t^{\Kb^*}, \ub_t^{\Kb^*})\right]\leq \frac{\epsilon T}{2}           
        \end{split}
        \end{equation}
    \end{enumerate}
    Substituting Equations \eqref{Eq2: T Main Theorem}, \eqref{Eq3: T Main Theorem}, \eqref{Eq4: T Main Theorem} and \eqref{Eq5: T Main Theorem} into Equation \eqref{Eq1: T Main Theorem}, we get
    \begin{equation}\label{Eq6: T Main Theorem}
    \begin{split}
        &J_T^j(\mathcal{A}_1) - \min_{\Kb}J_T(\Kb)\\
        \leq &\frac{n}{2\eta}\norm{\Mb_{1:h}^1 - \Mb^*_{1:h}}^2 + T\left(\frac{8\eta n\sqrt{n}}{(1-\beta)} + 2\eta\right)\left(\frac{6G\kappa_B\kappa^5W^2\sqrt{m}h}{\gamma^2}\log^{1/4}\left(\frac{2}{\gamma}\right)\right)^2\\
        + &nT\frac{144 G^2\kappa_B^2\kappa^{10}W^4\sqrt{n}}{\gamma^3\sqrt{T}(1-\beta)}\log^{1/2}\left(\frac{2}{\gamma}\right) + n\epsilon T,
    \end{split}
    \end{equation}
    where the final result is derived by replacing $m,h,\eta$ with our selections.

\end{proof}


\end{document}